\newtheorem{prop}{Proposition}
\newtheorem{thm}{Theorem}
\theoremstyle{definition}
\newtheorem{defn}{Definition}
\newtheorem{example}{Example}
\def\bP{{\mathbb P}}
\def\Z{{\mathbb Z}}
\def\Q{{\mathbb Q}}
\def\R{{\mathbb R}}
\def\C{{\mathbb C}}
\def\B{\mathcal{B}}
\def\G{\mathcal{G}}
\def\O{{\mathcal O}}
\def\E{{\mathcal E}}
\def\H{{\mathcal H}}
\def\cC{{\mathcal C}}
\def\D{{\mathcal D}}
\def\AS{{\mathfrak S}}
\def\CS{{\mathfrak C}}
\def\LG{\mathrm{LG}}
\def\Sp{\mathrm{Sp}}
\def\SL{\mathrm{SL}}
\def\HH{\mathrm{H}}
\def\CH{\mathrm{CH}}
\def\Id{\mathrm{Id}}
\def\rX{\mathrm{X}}
\def\inv{\mathrm{inv}}
\def\Inv{\mathrm{Inv}}
\def\Par{\Pi}
\def\l{{\lambda}}
\def\x{\mathrm{x}}
\def\L{{\Lambda}}
\def\X{{\mathfrak X}}
\def\a{{\alpha}}
\def\b{{\beta}}
\def\s{{\sigma}}
\def\om{{\varpi}}
\def\ome{{\omega}}
\def\Om{{\Omega}}
\def\r{\mathfrak{r}}
\DeclareMathOperator{\Spec}{Spec}
\newcommand\bull{{\scriptscriptstyle \bullet}}
\newcommand{\dis}{\displaystyle}
\newcommand{\ssm}{\smallsetminus}
\newcommand{\lra}{\longrightarrow}
\newcommand{\hra}{\hookrightarrow}
\newcommand{\ra}{\rightarrow}
\newcommand{\dbar}{\overline{\partial}}
\newcommand{\End}{\mbox{End}}
\newcommand{\Ker}{\mbox{Ker}}
\newcommand{\Tr}{\mbox{Tr}}
\newcommand{\ov}{\overline}
\newcommand{\noin}{\noindent}
\newcommand{\wt}{\widetilde}
\newcommand{\Pf}{\mbox{Pfaffian}}
\newcommand{\wh}{\widehat}
\newcommand{\longhookrightarrow}{\lhook\joinrel\relbar\joinrel\rightarrow}
\newcommand{\lhra}{\longhookrightarrow}
\begin{document}

\title[Schubert polynomials and Arakelov theory]
{Schubert polynomials and Arakelov theory of symplectic
flag varieties}
\author{Harry Tamvakis}
\date{April 26, 2010\\ \indent 2000 {\em Mathematics 
Subject Classification.} 14M15; 14G40, 05E15.}
\thanks{The author was supported in part by National Science 
Foundation Grant DMS-0639033.}
\address{University of Maryland,
Department of Mathematics,
1301 Mathematics Building,
College Park, MD 20742, USA}
\email{harryt@math.umd.edu}

\begin{abstract}
Let $\X=\Sp_{2n}/B$ the flag variety of the symplectic group.  We
propose a theory of combinatorially explicit Schubert polynomials 
which represent the Schubert classes in the Borel presentation of 
the cohomology ring of $\X$.  We use these polynomials to describe the
arithmetic Schubert calculus on $\X$. Moreover, we give a method to
compute the natural arithmetic Chern numbers on $\X$, and show that
they are all rational numbers.
\end{abstract}

\maketitle 

\setcounter{section}{-1}

\section{Introduction}

\noindent
Let $\X=\Sp_{2n}/B$ be the flag variety for the symplectic group
$\Sp_{2n}$ of rank $n$. The cohomology (or Chow) ring of $\X$ has a
standard presentation, due to Borel \cite{Bo}, as a quotient ring
$\Z[\x_1,\ldots,\x_n]/I_n$, where the variables $\x_i$ come from the
characters of $B$ and $I_n$ is the ideal generated by the invariant
polynomials under the action of the Weyl group of $\Sp_{2n}$. On the
other hand, the cohomology $\HH^*(\X,\Z)$ is a free abelian group with
basis given by the classes of the Schubert varieties in $\X$.

The aim of a theory of {\em Schubert polynomials} is to provide an
explicit and natural set of polynomial representatives for the
Schubert classes in the above Borel presentation of the cohomology
ring. Using a construction of Bernstein-Gelfand-Gelfand \cite{BGG} and
Demazure \cite{D1, D2}, one has an algorithm for obtaining a family of
polynomials which represent the Schubert classes, by applying divided
difference operators to a polynomial $T_{w_0}$ representing the class
of a point in $\X$. For the usual $\SL_n$ flag varieties, Lascoux and
Sch\"utzenberger \cite{LS} observed that one special choice of
$T_{w_0}$ leads to polynomials that represent the Schubert classes
simultaneously for all sufficiently large $n$. These type A Schubert
polynomials are the most natural ones to use from the point of view of
combinatorics and of geometry; they describe the degeneracy loci of
maps of vector bundles (see \cite{F1, BKTY, KMS}) and are the
prototype for any proposed theory of Schubert polynomials in the other
Lie types.

For the purposes of the present paper, our interest in Schubert
polynomials is due to their utility in studying the deformations of
the cohomology ring of $\X$ which appear in quantum cohomology and in
the extension of Arakelov theory to higher dimensions due to Gillet
and Soul\'e \cite{GS1}. With this latter application in mind, we
observed in \cite{T2, T3, T4} that a suitable theory of polynomials should
provide a lifting of the Schubert calculus from the quotient ring
$\Z[\rX_n]/I_n$ to the ring $\Z[\rX_n]$ of all polynomials in
$\rX_n=(\x_1,\ldots,\x_n)$. In addition, one would like to have strong
control over which polynomials are contained in the ideal $I_n$ of
relations; this was called the {\em ideal property} in \cite{T3}.

The search for a good theory of Schubert polynomials in types
B, C, and D has received much attention in the past (see e.g.\
\cite{BH, FK, F2, F3, KT1, LP1, LP2}). The best understood theory from the
combinatorial point of view appears to be that of Billey and Haiman
\cite{BH}, whose Schubert polynomials form a $\Z$-basis for a
polynomial ring in infinitely many variables. Unfortunately, when
expressed in their most explicit form, the Billey-Haiman polynomials
are not suitable for the above mentioned applications,
because the variables used are not geometrically natural.

This problem is already apparent in the case of the Lagrangian
Grassmannian $\LG=\Sp_{2n}/P_n$, where $P_n$ denotes the maximal
parabolic subgroup associated to the right end root in the Dynkin
diagram of type $\text{C}_n$. The Billey-Haiman Schubert polynomials
for the Schubert classes which pull back to $\X$ from $\LG$ coincide
with the Schur $Q$-functions \cite{S}, but the latter are not
polynomials in the Chern roots of any homogeneous vector bundle over
$\LG$. For the geometric applications to degeneracy loci and
elsewhere, one may instead use the $\wt{Q}$-polynomials of Pragacz and
Ratajski \cite{PR}. Indeed, we showed in \cite{T4} and \cite{KT2} that
the latter objects control the arithmetic and quantum Schubert
calculus on $\LG$, respectively.

In the first part of this article, we introduce a new theory of
symplectic Schubert polynomials $\CS_w(\rX_n)$, which are to the
Billey-Haiman Schubert polynomials what the $\wt{Q}$-polynomials are
to the Schur $Q$-functions. The $\CS_w$ are defined as linear
combinations of products of $\wt{Q}$-polynomials with type A Schubert
polynomials, with coefficients given by combinatorially explicit
integers which appear in the Billey-Haiman theory. Furthermore, the
polynomials $\CS_w$ extend to a $\Z$-basis of the full polynomial ring
$\Z[\rX_n]$, which has the ideal property mentioned above. Although
they represent the Schubert classes in the Borel
presentation of $\HH^*(\X,\Z)$, the $\CS_w$ do not respect the divided
difference operator given by the sign change, and thus differ from the
previously known type C Schubert polynomials.

In the second half of the paper, we use the polynomials $\CS_w$ to
describe the arithmetic Schubert calculus on $\X$, in its natural
smooth Chevalley model over $\Spec\Z$. Arithmetic Schubert calculus is
concerned with the multiplicative structure of the Gillet-Soul\'e
arithmetic Chow ring $\wh{\CH}(\X)$, expressed in terms of certain (carefully
chosen!)  {\em arithmetic Schubert classes}. The present study is thus a
belated sequel to \cite{T2}, which examined arithmetic intersection
theory on $\SL_n$ flag varieties. As noted in the introduction to
\cite{T2}, the hermitian differential geometry required to develop the
theory for $\Sp_{2n}/B$ was available at that time; what was lacking
was the theory of Schubert polynomials which is provided here.

The arithmetic scheme $\X$ parametrizes, over any base field $k$, 
all partial flags of subspaces
\[
0 =E_0 \subset E_1 \subset \cdots \subset E_n \subset E_{2n} = E
\]
with $\dim(E_i)=i$ for each $i$ and $E_n$ isotropic with respect to
the skew diagonal symplectic form on $E$.  Let $\ov{E}$ be the trivial
vector bundle of rank $2n$ over $\X$ equipped with a trivial hermitian
metric on $E(\C)$ compatible with the symplectic form. The metric on
$E(\C)$ induces metrics on all the subbundles $E_i(\C)$, giving a {\em
hermitian filtration} of $\ov{E}$. For $1\leq i \leq n$, let
$\ov{L}_i$ denote the quotient line bundle $E_{n+1-i}/E_{n-i}$ with
the induced hermitian metric on $L_i(\C)$, and set
$\wh{x}_i=-\wh{c}_1(\ov{L}_i)$, where $\wh{c}_1(\ov{L}_i)$ is the
arithmetic first Chern class of $\ov{L}_i$.

Let $h\in\Z[\rX_n]$ be any polynomial in the ideal $I_n$. We provide
an algorithm to compute the arithmetic intersection
$h(\wh{x}_1,\ldots,\wh{x}_n)$ in $\wh{\CH}(\X)$, as the class of an
explicit $\Sp(2n)$-invariant differential form on $\X(\C)$. In
particular, we show that all arithmetic Chern numbers on $\X$
involving the $\wh{x}_i$ are rational numbers (Theorem \ref{mainthm}).
The key relations in the ring $\wh{\CH}(\X)$ required for this
calculation involve the {\em Bott-Chern forms} of hermitian fitrations
over $\X$. As in \cite{T2}, these differential forms are identified
with certain polynomials in the entries of the curvature matrices of
the homogeneous vector bundles over $\X$. Using a computation of
Griffiths and Schmid \cite{GrS}, the latter entries may be expressed
in terms of $\Sp(2n)$-invariant differential forms on
$\Sp(2n)$. Finally, our main result (Theorem \ref{SPring}) describes
the arithmetic Schubert calculus on $\X$ using the structure
constants for the product of two symplectic Schubert polynomials
$\CS_w$ in the polynomial ring $\Z[\rX_n]$.

The paper is organized as follows. In \S \ref{prelims} we provide some
combinatorial preliminaries on $\wt{Q}$-polynomials and the
Lascoux-Sch\"utzenberger and Billey-Haiman Schubert polynomials. We
introduce our theory of symplectic Schubert polynomials in \S
\ref{schubdef} and derive some of their basic properties in \S
\ref{sproperties}. Section \ref{hdg} includes the main facts from the
hermitian differential geometry of $\X(\C)$ that we require. The
Bott-Chern forms of hermitian filtrations are discussed in \S
\ref{bcf} and the curvature of the relevant homogeneous vector bundles
over $\X$ is computed in \S \ref{hvb}. The arithmetic intersection
theory of $\X$ is studied in \S \ref{ait}. Our method for computing
arithmetic intersections is explained in \S \ref{compaa}, the
arithmetic Schubert calculus is described in \S \ref{asc}, while \S
\ref{iacr} examines the invariant arithmetic Chow ring of $\X$.  The
theory developed in these sections can be used to compute the Faltings
height of $\X$ under its pluri-Pl\"ucker embedding in projective
space; this application is given in \S \ref{hgts}. Section \ref{examp}
works out the example of $\Sp_4/B$ explicitly.

The results of this article on Schubert polynomials and arithmetic
intersection theory have analogues for the orthogonal flag varieties.
This theory is discussed in detail in \cite{T5}. Part of this project
was announced at the Oberwolfach meeting on Arakelov Geometry in
September of 2005. I wish to thank the organizers Jean-Benoit Bost,
Klaus K\"unnemann, and Damian Roessler for making this stimulating
event possible.

\section{Preliminary definitions}
\label{prelims}

\subsection{$\wt{Q}$- and $Q$-functions}
\label{definitions}
A partition $\l=(\l_1,\ldots,\l_r)$ is a finite sequence of weakly
decreasing nonnegative integers; the set of all partitions is denoted
$\Par$. 
The sum $\sum \l_i$ of the parts of $\l$ is the {\em weight} $|\l|$
and the number of (nonzero) parts $\l_i$ is the {\em length}
$\ell(\l)$ of $\l$. We set $\l_r=0$ for any $r>\ell(\l)$. 
A partition is {\em strict} if all its nonzero parts are distinct.
Let $\G_n=\{\l\in \Pi \ |\ \l_1 \leq n\}$ and let $\D_n$ be the set
of strict partitions in $\G_n$. If $\l\in \D_n$, we let
$\l'$ denote the partition in $\D_n$ whose parts complement the parts
of $\l$ in the set $\{1,\ldots,n\}$.

Let $\rX=(\x_1,\x_2,\ldots)$ be a sequence of commuting independent 
variables. The elementary symmetric functions $e_k=e_k(\rX)$ are defined
by the equation
\[
\sum_{k=0}^{\infty}e_k(\rX)t^k = \prod_{i=1}^{\infty}(1+\x_it).
\]
The polynomial ring $\Lambda = \Z[e_1,e_2,\ldots]$ is the ring of 
symmetric functions in the variables $\rX$. Following Pragacz and Ratajski 
\cite{PR}, for
each partition $\l$, we define a symmetric polynomial
$\wt{Q}_{\l}\in \Lambda$ as follows: initially, set $\wt{Q}_k=e_k$ for
$k\geq 0$.  For $i,j$ nonnegative integers, let
\[
\wt{Q}_{i,j}=
\wt{Q}_i\wt{Q}_j+
2\sum_{r=1}^j(-1)^r\wt{Q}_{i+r}\wt{Q}_{j-r}.
\]
If $\l$ is a partition of length greater than two and $m$ is the least
positive integer with $2m\geq \ell(\l)$, then set
\[
\dis
\wt{Q}_{\l}=\Pf(\wt{Q}_{\l_i,\l_j})_{1\leq i<j\leq 2m}.
\]
These $\wt{Q}$-functions are modelled on Schur's $Q$-functions \cite{S},
and enjoy the following properties:

\medskip

(a) The $\wt{Q}_{\l}(\rX)$ for $\l\in\Pi$ form a $\Z$-basis of $\L$.

\medskip

(b) $\wt{Q}_{k,k}(\rX)=e_k(\rX^2) := e_k(\x_1^2,\x_2^2,\ldots)$ for all $k$.

\medskip

(c) If $\l=(\l_1,\ldots,\l_r)$ and
$\l^+=\l\cup(k,k)=(\l_1,\ldots,k,k,\ldots,\l_r)$ then
\[
\wt{Q}_{\l^+}=\wt{Q}_{k,k}\wt{Q}_{\l}.
\]

\medskip

(d) The coefficients of $\wt{Q}_{\l}(\rX)$ are nonnegative integers.

\medskip
\noin
Let $\L_n=\Z[\x_1,\ldots,\x_n]^{S_n}$ be the ring of symmetric polynomials
in $\rX_n=(\x_1,\ldots,\x_n)$. Then we have an additional property

\medskip

(e) If $\l_1>n$, then $\wt{Q}_{\l}(\rX_n)=0$. The 
$\wt{Q}_{\l}(\rX_n)$ for $\l\in\G_n$ form a $\Z$-basis of $\L_n$.

\medskip
Suppose that $Y= (y_1,y_2,\ldots)$ is a second sequence of variables
and define symmetric functions $q_k(Y)$ by using the generating series
\[
\sum_{k=0}^{\infty}q_k(Y)t^k = \prod_{i=1}^{\infty}\frac{1+y_it}{1-y_it}.
\]
Let $\Gamma = \Z[q_1,q_2\ldots]$ and define a ring homomorphism
$\eta:\L \ra \Gamma$ by setting $\eta(e_k(\rX)) = q_k(Y)$ for each
$k\geq 1$.  J\'ozefiak \cite{J} showed that the kernel of $\eta$ is
the ideal generated by the $e_k(\rX^2)$ for $k>0$; it follows that
$\eta(\wt{Q}_{\l}) = 0$ unless $\l$ is a strict partition. Moreover,
if $p_k(\rX) = \x_1^k+\x_2^k+\cdots$ denotes the $k$-th power sum,
then we have $\eta(p_k(\rX)) = 2\,p_k(Y)$, if $k$ is odd, and
$\eta(p_k(\rX))=0$, if $k>0$ is even.  For any strict partition $\l$,
the Schur $Q$-function $Q_{\l}(Y)$ may be defined as the image of
$\wt{Q}_{\l}(\rX)$ under $\eta$.  The $Q_\l$ for strict partitions
$\l$ have nonnegative integer coefficients and form a free $\Z$-basis
of $\Gamma$.

\subsection{Divided differences and type A Schubert polynomials}
\label{Wsec}
Let $S_n$ denote the symmetric group of permutations of the set
$\{1,\ldots,n\}$; the elements $\om$ of $S_n$ are written in
single-line notation as $(\om(1),\om(2),\ldots,\om(n))$ (as usual we
will write all mappings on the left of their arguments).  Now $S_n$ is the
Weyl group for the root system $\text{A}_{n-1}$ and is generated by
the simple transpositions $s_i$ for $1\leq i\leq n-1$, where $s_i$
interchanges $i$ and $i+1$ and fixes all other elements of
$\{1,\ldots,n\}$.

The hyperoctahedral group $W_n$ is the Weyl group for the root system
$\text{C}_n$. The elements of $W_n$ are permutations with a sign attached to
each entry, and we will adopt the notation where a bar is written over
an element with a negative sign.  $W_n$ is an extension of $S_n$ by an
element $s_0$ which acts on the right by
\[
(u_1,u_2,\ldots,u_n)s_0=(\ov{u}_1,u_2,\ldots,u_n).
\]
A {\em reduced word} of $w\in W_n$ is a sequence $a_1\ldots a_r$ of
elements in $\{0,1,\ldots,n-1\}$ such that $w=s_{a_1}\cdots s_{a_r}$
and $r$ is minimal (so equal to the length $\ell(w)$ of $w$).  The
elements of maximal length in $S_n$ and $W_n$ are
\[
\om_0=(n,n-1,\ldots,1) \  \ \ \mathrm{and} \ \ \ 
w_0=(\ov{1},\ov{2},\ldots,\ov{n})
\]
respectively.

The group $W_n$ acts on the ring $\Z[\rX_n]$ of polynomials in
$\rX_n$: the transposition $s_i$ interchanges $\x_i$ and $\x_{i+1}$
for $1\leq i\leq n-1$, while $s_0$ replaces $\x_1$ by $-\x_1$ (all
other variables remain fixed). The ring of invariants
$\Z[\rX_n]^{W_n}$ is the ring of polynomials in $\Z[\rX_n]$ symmetric
in $\rX_n^2=(\x_1^2,\ldots,\x_n^2)$.  Following \cite{BGG} and
\cite{D1} \cite{D2}, there are divided difference operators
$\partial_i: \Z[\rX_n]\ra \Z[\rX_n]$. For $1\leq i\leq n-1$ they are
defined by
\[
\partial_i(f)=(f-s_if)/(\x_i-\x_{i+1})
\]
while 
\[
\partial_0(f)=(f-s_0f)/(2\x_1),
\]
for any $f\in \Z[\rX_n]$. For each $w\in W_n$, define an operator
$\partial_w$ by setting
\[
\partial_w=\partial_{a_1}\circ \cdots \circ \partial_{a_r}
\]
if $a_1\ldots a_r$ is a reduced word of $w$.

For every permutation $\om\in S_n$, Lascoux and Sch\"utzenberger
\cite{LS} defined a {\em type A Schubert polynomial}
$\AS_{\om}(\rX_n)\in\Z[\rX_n]$ by
\[
\AS_{\om}(\rX_n)=\partial_{\om^{-1}\om_0}\left(\x_1^{n-1}\x_2^{n-2}\cdots
\x_{n-1} \right).
\]
This definition is stable under the natural inclusion of $S_n$ into
$S_{n+1}$, hence the polynomial $\AS_w$ makes sense for $w\in
S_{\infty}= \cup_{n=1}^\infty S_n$. The set $\{\AS_w\}$ for $w \in
S_{\infty}$ is a free $\Z$-basis of $\Z[\rX]=\Z[\x_1,\x_2,\ldots]$.
Furthermore, the coefficients of $\AS_w$ are nonnegative integers
with combinatorial significance (see e.g.\ \cite{BJS, BKTY}).

\subsection{Billey-Haiman Schubert polynomials}
We regard $W_n$ as a subgroup of $W_{n+1}$ in the obvious way and let
$W_\infty$ denote the union $\cup_{n=1}^\infty W_n$.  Let
$Z=(z_1,z_2,\ldots)$ be a third sequence of commuting variables.  In
their fundamental paper \cite{BH}, Billey and Haiman defined a family
$\{\cC_w\}_{w\in W_\infty}$ of Schubert polynomials of type C, which
are a free $\Z$-basis of the ring $\Gamma[Z]$. The
expansion coefficients for a product $\cC_u\cC_v$ in the basis of
Schubert polynomials agree with the Schubert structure constants on
symplectic flag varieties for sufficiently large $n$. For every $w\in
W_n$ there is a unique expression
\begin{equation}
\label{bheq}
\cC_w = \sum_{{\l \, \text{strict}}\atop{\om\in S_n}}e_{\l,\om}^w
Q_\l(Y)\AS_\om(Z)
\end{equation}
where the coefficients $e_{\l,\om}^w$ are nonnegative integers. 
We proceed to give a combinatorial interpretation of these 
numbers.

A sequence $a=(a_1,\ldots,a_m)$ is called {\em unimodal} if for some
$r$ with $0\leq r \leq m$, we have
\[
a_1 > a_2 > \cdots > a_r < a_{r+1} < \cdots < a_m.
\]

Let $w\in W_n$ and $\l$ be a Young diagram with $r$ rows such that
$|\l| = \ell(w)$. A {\em Kra\'skiewicz tableau} for $w$ of shape $\l$
is a filling $T$ of the boxes of $\lambda$ with nonnegative integers
in such a way that

\medskip
\noindent
a) If $t_i$ is the sequence of entries in the $i$-th row of $T$,
reading from left to right, then the row word $t_r\ldots t_1$ is
a reduced word for $w$.

\medskip
\noindent
b) For each $i$, $t_i$ is a unimodal subsequence of maximum length
in $t_r \ldots t_{i+1} t_i$.

\begin{example} 
Let $\l\in\D_n$, $\ell=\ell(\l)$, and $k=n-\ell=\ell(\l')$. The barred
permutation
\[
w_{\l}=(\ov{\l}_1,\ldots,\ov{\l}_{\ell},\l'_k,\ldots,\l'_1)
\]
is the {\em maximal Grassmannian element} of $W_n$ corresponding to
$\l$. There is a unique Kra\'skiewicz tableau for $w_{\l}$, which has
shape $\l$, and is given as in the following example, for $\l =
(7,4,3)$:
\[
\begin{array}{l}
6\ 5 \ 4 \ 3 \ 2 \ 1 \ 0 \\
3\ 2 \ 1 \ 0 \\
2\ 1 \ 0.
\end{array}
\]
\end{example}


%

%


According to \cite[Theorem 3]{BH}, the polynomial $\cC_w$ satisfies
\[
\cC_w = \sum_{uv=w} F_u(Y) \AS_v(Z),
\]
summed over all reduced factorizations $uv=w$ in $W_\infty$ (i.e.,
such that $\ell(u)+\ell(v) = \ell(w)$) with $v\in S_\infty$. The left
factors $F_u(Y)$ are type C Stanley symmetric functions of \cite{BH,
FK, La}. In addition, Lam \cite{La} has shown that for any $u\in
W_\infty$,
\[
F_u(Y) = \sum_{\l} c^u_{\l} \, Q_{\l}(Y)
\]
where $c^u_{\l}$ equals the number of Kra\'skiewicz tableaux for $u$
of shape $\l$.  By combining these two facts, we deduce the next
result.

\begin{prop}[BH, La]
\label{BHL}
For every $w \in W_\infty$, the coefficient $e^w_{\l,\om}$ in {\em
(\ref{bheq})} is equal to the number of Kra\'skiewicz tableaux for
$w\om^{-1}$ of shape $\l$, if $\ell(w\om^{-1}) = \ell(w) - \ell(\om)$,
and equal to zero otherwise.
\end{prop}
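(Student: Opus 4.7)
The plan is to synthesize the two results stated immediately before the proposition. First I would recall the Billey--Haiman identity \cite[Theorem 3]{BH},
\[
\cC_w = \sum_{uv=w} F_u(Y)\,\AS_v(Z),
\]
where the sum ranges over reduced factorizations $uv=w$ in $W_\infty$ with $v\in S_\infty$ (i.e.\ $\ell(u)+\ell(v)=\ell(w)$). Into this I would substitute Lam's expansion $F_u(Y) = \sum_\l c^u_\l\,Q_\l(Y)$, where $c^u_\l$ is the number of Kra\'skiewicz tableaux for $u$ of shape $\l$, to obtain
\[
\cC_w = \sum_{\substack{uv=w,\ v\in S_\infty \\ \ell(u)+\ell(v)=\ell(w)}} \ \sum_{\l\text{ strict}} c^u_\l \, Q_\l(Y)\,\AS_v(Z).
\]

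Next I would fix a strict partition $\l$ and a permutation $\om\in S_\infty$ and observe that the conditions $v=\om$ and $uv=w$ determine $u$ uniquely as $u=w\om^{-1}$. The factorization $(w\om^{-1})\om = w$ is reduced precisely when $\ell(w\om^{-1}) = \ell(w) - \ell(\om)$; otherwise no term with $v=\om$ appears in the sum above. I would then invoke the fact that $\{Q_\l(Y)\,\AS_\om(Z)\}$, as $\l$ ranges over strict partitions and $\om$ over $S_\infty$, is a free $\Z$-basis of $\Gamma[Z]$. This follows from the basis property of the $Q_\l$ in $\Gamma$ (stated in \S\ref{definitions}) together with the basis property of the $\AS_\om$ in $\Z[Z]$ (stated in \S\ref{Wsec}). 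Uniqueness of the basis expansion forces
\[
e^w_{\l,\om} \;=\; \begin{cases} c^{w\om^{-1}}_\l & \text{if } \ell(w\om^{-1}) = \ell(w)-\ell(\om),\\ 0 & \text{otherwise,}\end{cases}
\]
and the conclusion then follows from Lam's combinatorial interpretation of $c^{w\om^{-1}}_\l$.

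The proof is essentially a bookkeeping exercise: the only conceptual point is the uniqueness of the $(\l,\om)$-expansion, which is immediate from the two basis statements cited above. There is no substantive obstacle; the real content has already been imported from \cite{BH} and \cite{La}.
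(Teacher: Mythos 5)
Your argument is precisely the one the paper intends: substitute Lam's expansion of $F_u(Y)$ into the Billey--Haiman factorization formula and read off coefficients via the basis property of $\{Q_\l(Y)\AS_\om(Z)\}$ in $\Gamma[Z]$. The paper compresses this to ``by combining these two facts, we deduce the next result,'' so your proof is a correct and slightly more explicit version of the same approach.
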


\section{Symplectic Schubert polynomials}
\label{ssp}

\subsection{Isotropic flags and Schubert varieties}
\label{spflag}
Consider the vector space $\C^{2n}$ with its canonical basis
$\{e_i\}_{i=1}^{2n}$ of unit coordinate vectors. We define the
{\em skew diagonal symplectic form} $[\ \,,\ ]$ on $\C^{2n}$ by setting
$[e_i,e_j]=0$ for $i+j\neq 2n+1$ and $[e_i,e_{2n+1-i}]=1$
for $1\leq i \leq n$. The symplectic group $\Sp_{2n}(\C)$ is 
the group of linear automorphisms of $\C^{2n}$ preserving 
the symplectic form. The upper triangular matrices in $\Sp_{2n}$
form a Borel subgroup $B$.

An $n$-dimensional subspace $\Sigma$ of $\C^{2n}$
is called {\em Lagrangian} if the restriction of the symplectic 
form to $\Sigma$ vanishes. Let $\X=\Sp_{2n}/B$ be the variety 
parametrizing flags of subspaces 
\[
0= E_0 \subset E_1 \subset \cdots \subset E_n \subset E_{2n} = \C^{2n}
\]
with $\dim E_i = i$ and $E_n$ Lagrangian. Each such flag can be
extended to a complete flag $E_{\bull}$ in $\C^{2n}$ by letting
$E_{n+i}=E_{n-i}^{\perp}$ for $1\leq i\leq n$; we will call such a
flag a {\em complete isotropic flag}.  The same notation is used to
denote the tautological flag $E_\bull$ of vector bundles over
$\X$. 

There is a group monomorphism $\phi:W_n\hra S_{2n}$ with image
\[
\phi(W_n)=\{\,\om\in S_{2n} \ |\ \om(i)+\om(2n+1-i) = 2n+1,
 \ \ \text{for all}  \ i\,\}.
\]
The map $\phi$ is determined by setting, 
for each $w=(w_1,\ldots,w_n)\in W_n$ and $1\leq i \leq n$, 
\[
\phi(w)(i)=\left\{ \begin{array}{cl}
             n+1-w_{n+1-i} & \mathrm{ if } \ w_{n+1-i} \ \mathrm{is} \ 
             \mathrm{unbarred}, \\
             n+\ov{w}_{n+1-i} & \mathrm{otherwise}.
             \end{array} \right.
\]

Let $F_{\bull}$ be a fixed complete isotropic flag.  For every $w\in
W_n$ define the {\em Schubert variety} $\X_w(F_\bull)\subset \X$ as
the locus of $E_\bull \in \X$ such that
\[
\dim(E_r\cap F_s)\geq \#\,\{\,i \leq r \ |\ \phi(w)(i)> 2n-s\,\}
\ \ \mathrm{for} \ \ 1\leq r\leq n,\, 1\leq s\leq 2n.
\]
The Schubert class $\sigma_w$ in $\HH^{2\ell(w)}(\X,\Z)$ is the 
cohomology class which is Poincar\'e dual to the homology class
determined by $\X_w(F_\bull)$.

According to Borel \cite{Bo}, the cohomology ring $\HH^*(\X,\Z)$ is 
presented as a quotient 
\begin{equation}
\label{presentation}
\HH^*(\X,\Z) \cong \Z[\x_1,\ldots,\x_n]/I_n
\end{equation}
where $I_n$ is the ideal generated by all positive degree $W_n$-invariants 
in $\Z[\rX_n]$, that is, $I_n = \langle e_i(\rX_n^2),\ 1\leq i \leq n \rangle$.
The inverse of the isomorphism (\ref{presentation}) sends the class of 
$\x_i$ to $-c_1(E_{n+1-i}/E_{n-i})$ for each $i$ with $1\leq i \leq n$.

\subsection{Symplectic Schubert classes and Schubert polynomials}
\label{schubdef}
For every $\l\in \G_n$ and $\om\in S_n$, define the polynomial 
$\CS_{\l,\om}=\CS_{\l,\om}(\rX_n)$ by
\[
\CS_{\l,\om} = \wt{Q}_{\l}(\rX_n) \AS_\om(-\rX_n) = 
(-1)^{\ell(\om)}\wt{Q}_{\l}(\rX_n) \AS_\om(\rX_n).
\]
The products $\wt{Q}_{\l}(\rX_n) \AS_\om(\rX_n)$ for $\l\in\D_n$ and 
$\om\in S_n$ form a
basis for the polynomial ring $\Z[\rX_n]$ as a $\Z[\rX_n]^{W_n}$-module,
which was introduced and studied by Lascoux and Pragacz \cite{LP1}.
We observe here that the $\CS_{\l,\om}(\rX_n)$ for $\l\in\G_n$ and 
$\om\in S_n$ form a basis of $\Z[\x_1,\ldots,\x_n]$ as an abelian group. 
Moreover, properties (b) and (c) of $\wt{Q}$-polynomials ensure that 
for $\l\in\G_n\ssm\D_n$, we have $\CS_{\l,\om}(\rX_n)\in I_n$.

\begin{defn}
\label{cdefine}
For $w\in W_n$, define the symplectic Schubert polynomial 
$\CS_w=\CS_w(\rX_n)$ by
\begin{equation}
\label{defequ}
\CS_w = \sum_{{\l\in \D_n}\atop{\om\in S_n}} e^w_{\l,\om} \CS_{\l,\om}(\rX_n)
\end{equation}
where the coefficients $e^w_{\l,\om}$ are the same 
as in (\ref{bheq}) and Proposition \ref{BHL}.
\end{defn}

\begin{thm}
\label{defthm}
The symplectic Schubert polynomial $\CS_w(\rX_n)$ 
is the unique $\Z$-linear combination of the $\CS_{\l,\om}(\rX_n)$ for 
$\l\in\D_n$ and $\om\in S_n$ which represents the Schubert class
$\sigma_w$ in the Borel presentation {\em (\ref{presentation})}.
\end{thm}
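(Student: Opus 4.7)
The statement combines an existence claim (that $\CS_w$ represents $\sigma_w$) and a uniqueness claim (for the expansion in the $\CS_{\l,\om}$). My plan is to dispose of uniqueness first by showing that the $\CS_{\l,\om}$ with $\l\in\D_n$, $\om\in S_n$ descend to a $\Z$-basis of $\Z[\rX_n]/I_n\cong \HH^*(\X,\Z)$, and then to derive existence by pushing the Billey--Haiman formula~(\ref{bheq}) through a suitable specialization map.

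For uniqueness, I start from the $\Z$-basis $\{\CS_{\l,\om}\}_{\l\in\G_n,\,\om\in S_n}$ of $\Z[\rX_n]$ noted just before Definition~\ref{cdefine}. Let $J\subset\Z[\rX_n]$ be the $\Z$-span of the subset with $\l\in\G_n\ssm\D_n$. Property (b) identifies the generator $e_k(\rX_n^2)$ of $I_n$ with $\wt{Q}_{k,k}(\rX_n)$, and property (c) gives
\[
\CS_{\mu,\om}\cdot e_k(\rX_n^2)=\wt{Q}_{k,k}(\rX_n)\wt{Q}_\mu(\rX_n)\AS_\om(-\rX_n)=\CS_{\mu\cup(k,k),\,\om},
\]
showing simultaneously that each $\CS_{\nu,\om}$ with $\nu$ non-strict lies in $I_n$, and that multiplying the basis expansion of any element of $\Z[\rX_n]$ by a generator of $I_n$ produces a $\Z$-combination of such $\CS_{\nu,\om}$. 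Hence $J=I_n$, and the images of $\CS_{\l,\om}$ for $\l\in\D_n$, $\om\in S_n$ form a $\Z$-basis of the quotient; uniqueness of the expansion is immediate.

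For existence, I would invoke the Billey--Haiman theorem \cite{BH}, according to which $\cC_w$ represents the Schubert class $\sigma_w$ under a natural specialization $\psi:\Gamma[Z]\lra \Z[\rX_n]/I_n$. This homomorphism sends $z_i\mapsto -\x_i$ for $1\leq i\leq n$ (and $z_i\mapsto 0$ for $i>n$), so $\AS_\om(Z)\mapsto \AS_\om(-\rX_n)$, while on the $\Gamma$-factor it sends $q_k(Y)$ to the class of $e_k(\rX_n)=\wt{Q}_k(\rX_n)$. Well-definedness of this substitution on $\Gamma=\Lambda/\langle e_k(\rX^2)\rangle$ follows from J\'ozefiak's theorem combined with property (b), since the generators $e_k(\rX_n^2)$ of $\ker(\eta)$ map to generators of $I_n$. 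By properties (a) and (e) one has $\psi(Q_\l(Y))\equiv\wt{Q}_\l(\rX_n)\pmod{I_n}$ for $\l\in\D_n$, and $\psi(Q_\l(Y))=0$ otherwise. Applying $\psi$ to (\ref{bheq}) and using Proposition~\ref{BHL} then yields
\[
\sigma_w=\psi(\cC_w)=\sum_{\l\in\D_n,\,\om\in S_n}e^w_{\l,\om}\,\wt{Q}_\l(\rX_n)\AS_\om(-\rX_n)=\CS_w(\rX_n)\pmod{I_n},
\]
which is the required identity.

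The main obstacle is the verification that $\psi(\cC_w)=\sigma_w$; this is really the content of the Billey--Haiman theorem and comes down to matching their variable conventions with the Chern roots of the tautological line bundles $L_i=E_{n+1-i}/E_{n-i}$ on $\X$ (hence the sign in $z_i\mapsto -\x_i$). A subsidiary technical point is that the ideal $\ker(\eta)\subset\Lambda$ corresponds under $\rX\leftrightarrow\rX_n$ to $I_n\cap\Lambda_n$, so that $\psi$ is a bona fide ring map; this is precisely J\'ozefiak's identification of $\ker(\eta)$ with the ideal generated by $e_k(\rX^2)$, together with property~(b). Once these compatibilities are in hand, the computation in the previous paragraph is mechanical, and the uniqueness step absorbs any further ambiguity.
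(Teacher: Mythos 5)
Your argument is correct and its two-step structure matches the paper's proof: existence via the Billey--Haiman theorem combined with J\'ozefiak's description of $\ker\eta$, and uniqueness by exhibiting the $\CS_{\l,\om}$ with $\l\in\G_n\ssm\D_n$, $\om\in S_n$ as a $\Z$-basis of $I_n$. The uniqueness half is essentially identical to the paper's.

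The one genuine variation is in how you realize the Billey--Haiman specialization. You build $\psi:\Gamma[Z]\to\Z[\rX_n]/I_n$ directly on generators, $q_k\mapsto e_k(\rX_n)\bmod I_n$ and $z_i\mapsto-\x_i\bmod I_n$, keeping everything over $\Z$ and applying $\psi$ to (\ref{bheq}). The paper instead tensors with $\Q$, expands $\cC_w$ in the power-sum basis, applies the rational substitution $p_k(Y)\mapsto p_k(\rX)/2$, $z_i\mapsto-\x_i$ (which is how [BH, \S 2] is phrased), and then uses $\eta$ to show the result differs from $\CS_w$ by an element of $\langle e_i(\rX^2)\rangle$. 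Your version buys integrality and a cleaner ring map, but carries a small debt that you acknowledge without discharging: you must know that your $\psi$ agrees, after passing to $\Z[\rX_n]/I_n$, with the specialization for which Billey--Haiman actually prove the Schubert-class identity. This reduces to the computation that the power-sum substitution sends $q_k$ to $h_k(\rX_n)$ modulo $I_n$, together with the identity $h_k(\rX_n)\equiv e_k(\rX_n)\pmod{I_n}$ (a consequence of $\prod_i(1-\x_i^2t^2)\equiv 1\pmod{I_n}$). This is the one missing verification; once it is supplied, your proof stands on its own. Two minor slips of notation: the generators of $\ker(\eta)$ are the $e_k(\rX^2)$, not $e_k(\rX_n^2)$; and the identity $\psi(Q_\l(Y))\equiv\wt{Q}_\l(\rX_n)\pmod{I_n}$ is a direct consequence of the definitions of $\psi$ and $Q_\l=\eta(\wt{Q}_\l)$ (valid for all strict $\l$), not of properties (a) and (e), which only enter to kill the terms with $\l_1>n$.
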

\begin{proof}
For each $w\in W_n$, the Billey-Haiman polynomial $\cC_w$ represents
the Schubert class $\sigma_w$ in the Borel presentation after a
certain change of variables. Recall that a partition is {\em odd} if
all its non-zero parts are odd integers. For each partition $\mu$, let
$p_\mu=\prod_ip_{\mu_i}$, where $p_r$ denotes the $r$-th power
sum. The $p_\mu(Y)$ for $\mu$ odd form a $\Q$-basis of
$\Gamma\otimes_\Z\Q$. We therefore have a unique expression
\begin{equation}
\label{ontheway}
\cC_w = \sum_{{\mu\, \text{odd}}\atop{\om\in S_n}}a_{\mu,\om}^w\,
p_{\mu}(Y)\AS_\om(Z)
\end{equation}
in the ring $\Gamma[Z]\otimes_\Z\Q$.

Let $p_{odd}=(p_1,p_3,p_5,\ldots)$.  Define a polynomial
$\cC_w(p_{odd}(\rX), \rX_{n-1})$ in the variables $p_k:=p_k(\rX)$ for
$k$ odd and $\x_1,\ldots,\x_{n-1}$ by substituting $p_k(Y)$ with
$p_k(\rX)/2$ and $z_i$ with $-\x_i$ in (\ref{ontheway}). It is shown
in \cite[\S 2]{BH} that the polynomial $\cC_w(\rX_n):=
\cC_w(p_{odd}(\rX_n),\rX_{n-1})$ obtained by setting $\x_i = 0$ for
all $i>n$ in $\cC_w(p_{odd}(\rX),\rX_{n-1})$ represents the Schubert
class $\sigma_w$ in the Borel presentation (\ref{presentation}). Using
J\'ozefiak's homomorphism $\eta$ from \S \ref{definitions}, we see
that
\[
\sum_{{\l \, \text{strict}}\atop{\om\in S_n}} e_{\l,\om}^w
\wt{Q}_\l(\rX)\AS_\om(-\rX_n)
\]
differs from $\cC_w(p_{odd}(\rX),\rX_{n-1})$ by an element in the
ideal of $\L[\rX_{n-1}]$ generated by the $e_i(\rX^2)$ for
$i>0$. Since the ideal $I_n$ is generated by the polynomials
$e_i(\rX_n^2)$, and $\wt{Q}_{\l}(\rX_n)=0$ whenever $\l_1>n$, it
follows that $\CS_w$ represents the Schubert class $\s_w$ in the
presentation (\ref{presentation}), as required.

We claim that the $\CS_{\l,\om}$ for $\l\in \G_n\ssm\D_n$ and $\om\in
S_n$ form a free $\Z$-basis of $I_n$. To see this, note that if $h$ is
an element of $I_n$ then $h(\rX_n)=\sum_ie_i(\rX_n^2)f_i(\rX_n)$ for
some polynomials $f_i\in\Z[\rX_n]$. Now each $f_i$ is a unique
$\Z$-linear combination of the $\CS_{\mu,\om}$ for $\mu\in\G_n$ and 
$\om\in S_n$, and properties (b) and
(c) of \S \ref{definitions} give
\[
e_i(\rX_n^2)\CS_{\mu,\om}(\rX_n) =
\wt{Q}_{i,i}(\rX_n)\CS_{\mu,\om}(\rX_n) =
\CS_{\mu\cup(i,i),\om}(\rX_n).
\]
We deduce that any $h\in I_n$ lies in the $\Z$-linear span of the
$\CS_{\l,\om}$ for $\l\in \G_n\ssm\D_n$ and $\om\in S_n$.  Since the
$\CS_{\l,\om}$ for $\l\in \G_n$ and $\om\in S_n$ are linearly
independent, this proves the claim and the uniqueness assertion in the
theorem.
\end{proof}

\medskip

We remark that the statement of Theorem \ref{defthm} may serve as an
alternative definition of the symplectic Schubert polynomials
$\CS_w(\rX_n)$.

\subsection{Properties of symplectic Schubert polynomials}
\label{sproperties}
We give below some basic properties of the polynomials $\CS_w(\rX_n)$.

\medskip

(a) The set
\[
\{\CS_w \ |\ w\in W_n\}
\cup \{\CS_{\l,\om}\ |\ \l\in \G_n\ssm\D_n,\ \om\in S_n\}
\]
is a free $\Z$-basis of the polynomial ring $\Z[\x_1,\ldots,\x_n]$. 
The $\CS_{\l,\om}$ for $\l\in \G_n\ssm\D_n$ and $\om\in S_n$ span the
ideal $I_n$ of $\Z[\x_1,\ldots,\x_n]$
generated by the $e_i(\rX_n^2)$ for $1\leq i \leq n$.

\medskip

(b) For every $u,v\in W_n$, we have an equation
\begin{equation}
\label{structeq}
\CS_u\cdot \CS_v = \sum_{w\in W_n}c_{uv}^w\,\CS_w + 
\sum_{{\l\in\G_n\ssm\D_n}\atop{\om\in S_n}}
c_{uv}^{\l\om}\,\CS_{\l,\om}
\end{equation}
in the ring $\Z[\x_1,\ldots,\x_n]$. The coefficients $c_{uv}^w$ are 
nonnegative integers, which vanish unless $\ell(w)=\ell(u)+\ell(v)$, 
and agree with the structure constants in the equation of 
Schubert classes 
\[
\s_u\cdot \s_v = \sum_{w\in W_n}c_{uv}^w\, \s_w,
\]
which holds in $\HH^*(\X,\Z)$. The coefficients $c_{uv}^{\l\om}$ are
integers, some of which may be negative. Equation (\ref{structeq}) provides
the sought for lifting of the Schubert calculus from the cohomology
ring $\HH^*(\X,\Z)$ to $\Z[\x_1,\ldots,\x_n]$ discussed in the Introduction.

\medskip
(c) Stability property: For each $m<n$ let $i=i_{m,n}:W_m \to W_n$ 
be the natural embedding using the first $m$ components. Then for any
$w\in W_m$ we have
\[
\left.
\CS_{i(w)}(\rX_n)\right|_{x_{m+1}=\cdots=x_n=0}\, =\, \CS_w(\rX_m).
\]

\medskip
(d) For a maximal Grassmannian element $w_\l\in W_n$, we have 
$\CS_w(\rX_n) = \wt{Q}_\l(\rX_n)$.

\medskip

(e) For $\om\in S_n$ and $w\in W_n$, we have 
\[
\partial_\om\CS_w = \begin{cases}
(-1)^{\ell(\om)}\,\CS_{w\om} & \text{if} \ \ell(w\om) = \ell(w) - \ell(\om), \\
0 & \text{otherwise}.
\end{cases}
\]

\medskip

(f) Let $v_0=w_0\om_0=(\ov{n},\ov{n-1},\ldots,\ov{1})$. Then for every
$\om\in S_n$, we have
\[
\CS_{v_0\om}(\rX_n) = \wt{Q}_{\rho_n}(\rX_n)\AS_{\om}(-\rX_n),
\]
where $\rho_n=(n,n-1,\ldots,1)$.  In particular, for the element
$w_0\in W_n$ of longest length we have
\[
\CS_{w_0}(\rX_n) = \wt{Q}_{\rho_n}(\rX_n)\AS_{\om_0}(-\rX_n)
= (-1)^{n(n-1)/2} \x_1^{n-1}\x_2^{n-2}\cdots \x_{n-1}\wt{Q}_{\rho_n}(\rX_n).
\]
Thus $\CS_{w_0}(\rX_n)$ agrees with the symplectic Schubert polynomial
of Lascoux, Pragacz, and Ratajski \cite[Appendix A]{LP1} indexed by
$w_0$.  However these two families of polynomials do not coincide,
because unlike the Schubert polynomials of loc.\ cit., the
$\CS_w$ do not respect the  divided difference operator 
$\partial_0$.

\medskip

(g) To any $w\in W_n$ we associate a strict partition $\mu$ whose
parts are the absolute values of the negative entries of $w$. Let
$v\in S_n$ be the permutation of maximal length such that there exists
a factorization $w=uv$ with $\ell(w)=\ell(u)+\ell(v)$ for some $u\in
W_n$. Then $\CS_{\mu,v}$ is the unique homogeneous summand
$e_{\l,\om}^w\CS_{\l,\om}$ of $\CS_w$ in equation (\ref{defequ}) with
the weight $|\l|$ as small as possible.

\medskip
(h) Lascoux and Pragacz \cite[\S 1]{LP1} define a
$\Z[\rX_n]^{W_n}$-linear scalar product
\[
\langle\ ,\ \rangle : \Z[\rX_n]\times \Z[\rX_n] \lra \Z[\rX_n]^{W_n}
\]
by
\[
\langle f,g \rangle = (-1)^{n(n-1)/2}\partial_{w_0}(fg)
\]
for any $f,g\in \Z[\rX_n]$. 
The set of symplectic Schubert polynomials $\{\CS_w(\rX_n)\}_{w\in W_n}$
and the polynomials $\{\CS_{\l,\om}(\rX_n)\}_{\l\in \D_n,\om\in S_n}$
form two bases for the polynomial ring $\Z[\rX_n]$ as a
$\Z[\rX_n^2]$-module.  If $\l,\mu\in\D_n$ and $\rho,\pi\in S_n$ are such
that $\ell(\rho)+\ell(\pi)\leq n(n-1)/2$, then we have the
orthogonality relation
\begin{equation}
\label{orth2}
\langle \CS_{\l,\rho},\CS_{\mu,\pi}\rangle = \left\{ \begin{array}{cl}
             1 &  \mathrm{ if } \ \ \mu=\l' \ \ \mathrm{and} \ \
                    \pi=\om_0\rho, \\
             0 &  \mathrm{ otherwise. }
       \end{array} \right.
\end{equation}
Furthermore, if $u,v\in W_n$ are 
such that $\ell(u)+\ell(v)\leq n^2$, then we have 
\begin{equation}
\label{orth}
\langle\CS_u,\CS_v \rangle = \left\{ \begin{array}{cl}
             1 &  \mathrm{ if } \ \ v=w_0u, \\
             0 &  \mathrm{ otherwise. }
       \end{array} \right.
\end{equation}

\medskip
\medskip

The proof of Theorem \ref{defthm} established that the $\CS_{\l,\om}$
for $\l\in\G_n\ssm\D_n$ and $\om\in S_n$ form a basis of $I_n$. The
remaining statements in properties (a) and (b) follow because the
$\CS_w$ for $w\in W_n$ form a basis of $\Z[\rX_n]/I_n$. Properties
(c), (d), (e), (g) may be derived from the corresponding properties of
the Billey-Haiman Schubert polynomials, and (f) is a consequence of
(e).

Equation (\ref{orth2}) is proved by factoring the divided difference operator 
$\partial_{w_0}=\partial_{v_0}\partial_{\om_0}$, where $v_0 = w_0\om_0$,
noting that 
\[
\langle\CS_{\l,\rho}(\rX_n),\CS_{\mu,\pi}(\rX_n)\rangle =
(-1)^{n(n-1)/2}
\partial_{v_0}(\wt{Q}_\l(\rX_n)\wt{Q}_\mu(\rX_n))
\partial_{\om_0}(\AS_\rho(-\rX_n)\AS_\pi(-\rX_n)),
\]
and using the corresponding properties of the inner products defined
by $\partial_{v_0}$ and $\partial_{\om_0}$, which are derived in
\cite[(10)]{LP1} and \cite[(5.4)]{M1}, respectively. The relation
(\ref{orth}) may be deduced from (\ref{orth2}) and property (g) above,
or by using the fact that the Schubert classes $\sigma_u=\CS_u(\rX_n)$
and $\sigma_v= \CS_v(\rX_n)$ satisfy
\[
\int_{\X}\sigma_u\cdot \sigma_v = \delta_{u,w_0v}.
\]

\begin{example}
The list of all symplectic Schubert polynomials $\CS_w$ for $w\in W_3$
is given in Table \ref{schubtable}. These polynomials are displayed
according to the eight orbits of the symmetric group $S_3$ on $W_3$.
The Schubert polynomials in each orbit are easily computed from the
unique one of highest degree by applying type A divided difference
operators, using property (e).  The reader should compare this table
with \cite[Table 2]{BH} and \cite[Appendix A]{LP1}.
\end{example}

\begin{example}
Let $n=6$ and consider the (non-maximal) Grassmannian element $w =
126\ov{5}\ov{4}3$. The symplectic Schubert polynomial for $w$
is given by
\begin{align*}
\CS_w & = \wt{Q}_{651}\,\AS_{145236}- (\wt{Q}_{65}+\wt{Q}_{641})\,\AS_{245136}
-\wt{Q}_{65}\,\AS_{146235} \\
& +(\wt{Q}_{64}+\wt{Q}_{541})\,\AS_{345126}+
\wt{Q}_{64}\,\AS_{246135}-\wt{Q}_{54}\,\AS_{346125}.
\end{align*}
The corresponding Billey-Haiman Schubert polynomial is given by 
\begin{align*}
\cC_w &= Q_{871} +(Q_{87}+Q_{861})\,\AS_{124356} +
(Q_{86}+Q_{851})\,\AS_{134256} + (Q_{86}+Q_{761})\,\AS_{125346} \\
&+(Q_{85}+Q_{841})\, \AS_{234156} +
(Q_{85}+Q_{751}+Q_{76})\,\AS_{135246} +
Q_{76}\,\AS_{126345} \\ 
& +(Q_{84}+Q_{75}+Q_{741})\,\AS_{235146}
+(Q_{75}+Q_{651})\,\AS_{145236} + Q_{75}\,\AS_{136245} \\
&+ (Q_{74}+Q_{65}+Q_{641})\,\AS_{245136} +
Q_{74}\,\AS_{236145} + Q_{65}\,\AS_{146235}\\ 
&+ (Q_{64}+Q_{541})\,\AS_{345126} 
 + Q_{64}\,\AS_{246135}+Q_{54}\,\AS_{346125}.
\end{align*}
\end{example}

{\small{
\begin{table}[p]
\caption{Symplectic Schubert polynomials for $w\in W_3$}
\centering
\begin{tabular}{|l|c|} \hline
$w$ & $\CS_w(\rX_3)=\sum e_{\l,\om}^w
\,\wt{Q}_{\l}(\rX_3)\,\AS_\om(-\rX_3)$ \\ \hline 
$123 = 1$ & $1$ \\
$213 = s_1$ & $\wt{Q}_1 - \AS_{213}$ \\ 
$132 = s_2$ & $\wt{Q}_1 - \AS_{132}$ \\ 
$231 = s_1s_2$ & $\wt{Q}_2 - \wt{Q}_1\,\AS_{132} + \AS_{231}$ \\
$312=s_2s_1$ & $\wt{Q}_2 - \wt{Q}_1\,\AS_{213} + \AS_{312}$ \\ 
$321 = s_1s_2s_1$ & $\wt{Q}_3+\wt{Q}_{21}
-\wt{Q}_2\,\AS_{213}-\wt{Q}_2\,\AS_{132}
+\wt{Q}_1\,\AS_{312}+\wt{Q}_1\,\AS_{231} - \AS_{321}$ \\

$\ov{1}23 = s_0$ & $\wt{Q}_1$ \\ 
$2\ov{1}3 = s_0s_1$ & $\wt{Q}_2 - \wt{Q}_1 \,\AS_{213}$ \\ 
$\ov{1}32 = s_0s_2$ & $2\,\wt{Q}_2 - \wt{Q}_1\,\AS_{132}$ \\ 
$23\ov{1}= s_0s_1s_2$ & $\wt{Q}_3-\wt{Q}_2\,\AS_{132}+\wt{Q}_1\,\AS_{231}$ \\
$3\ov{1}2 = s_0s_2s_1$ & $\wt{Q}_3+\wt{Q}_{21}-2\,\wt{Q}_2\,\AS_{213}
+\wt{Q}_1\,\AS_{312}$ \\
$32\ov{1} = s_0s_1s_2s_1$ & $\wt{Q}_{31} 
- \wt{Q}_3\,\AS_{213} -\wt{Q}_3\,\AS_{132}
- \wt{Q}_{21}\,\AS_{132} +\wt{Q}_2\,\AS_{312}+2\,\wt{Q}_2\,\AS_{231} 
- \wt{Q}_1\,\AS_{321}$ \\

$\ov{2}13 = s_1s_0$ & $\wt{Q}_2$ \\ 
$1\ov{2}3 = s_1s_0s_1$ & $\wt{Q}_3 - \wt{Q}_2\,\AS_{213}$ \\
$\ov{2}31 = s_1s_0s_2$ & $\wt{Q}_3+\wt{Q}_{21}-\wt{Q}_2\,\AS_{132}$ \\
$13\ov{2}=s_1s_0s_1s_2$ & $-\wt{Q}_3\,\AS_{132}+\wt{Q}_2\,\AS_{231}$ \\
$3\ov{2}1 = s_1s_0s_2s_1$ & $\wt{Q}_{31}-\wt{Q}_3\,\AS_{213}-
\wt{Q}_{21}\,\AS_{213} + \wt{Q}_2\,\AS_{312}$ \\
$31\ov{2}=s_1s_0s_1s_2s_1$ & $-\wt{Q}_{31}\,\AS_{132}+\wt{Q}_3\,\AS_{312}
+\wt{Q}_3\,\AS_{231}+\wt{Q}_{21}\,\AS_{231}-\wt{Q}_2\,\AS_{321}$ \\

$\ov{3}12  = s_2s_1s_0$ & $\wt{Q}_3$ \\
$1\ov{3}2 = s_2s_1s_0s_1$ & $-\wt{Q}_3\,\AS_{213}$ \\
$\ov{3}21 = s_2s_1s_0s_2$ & $\wt{Q}_{31}-\wt{Q}_3\,\AS_{132}$ \\
$12\ov{3}=s_2s_1s_0s_1s_2$ & $\wt{Q}_3\,\AS_{231}$ \\
$2\ov{3}1 = s_2s_1s_0s_2s_1$ & $-\wt{Q}_{31}\,\AS_{213}+\wt{Q}_3\,\AS_{312}$ \\
$21\ov{3}=s_2s_1s_0s_1s_2s_1$ & 
$\wt{Q}_{31}\,\AS_{231}-\wt{Q}_{3}\,\AS_{321}$ \\

$\ov{2}\ov{1}3 = s_0s_1s_0$ & $\wt{Q}_{21}$ \\
$\ov{1}\ov{2}3 = s_0s_1s_0s_1$ & $\wt{Q}_{31} - \wt{Q}_{21}\,\AS_{213}$ \\ 
$\ov{2}3\ov{1} = s_0s_1s_0s_2$ & $\wt{Q}_{31} - \wt{Q}_{21}\,\AS_{132}$ \\
$\ov{1}3\ov{2}=s_0s_1s_0s_1s_2$ & $-\wt{Q}_{31}\,\AS_{132}
+\wt{Q}_{21}\,\AS_{231}$ \\
$3\ov{2}\ov{1} = s_0s_1s_0s_2s_1$ & $\wt{Q}_{32}-\wt{Q}_{31}\,\AS_{213}
+\wt{Q}_{21}\,\AS_{312}$ \\
$3\ov{1}\ov{2}=s_0s_1s_0s_1s_2s_1$ & $-\wt{Q}_{32}\,\AS_{132}
+\wt{Q}_{31}\,\AS_{312}+\wt{Q}_{31}\,\AS_{231}-\wt{Q}_{21}\,\AS_{321}$ \\

$\ov{3}\ov{1}2 = s_0s_2s_1s_0$ & $\wt{Q}_{31}$ \\
$\ov{1}\ov{3}2 = s_0s_2s_1s_0s_1$ & $-\wt{Q}_{31}\,\AS_{213}$ \\
$\ov{3}2\ov{1} = s_0s_2s_1s_0s_2$ & $\wt{Q}_{32} - \wt{Q}_{31}\,\AS_{132}$ \\
$\ov{1}2\ov{3}=s_0s_2s_1s_0s_1s_2$ & $\wt{Q}_{31}\,\AS_{231}$ \\
$2\ov{3}\ov{1}=s_0s_2s_1s_0s_2s_1$ & $-\wt{Q}_{32}\,\AS_{213}
+\wt{Q}_{31}\,\AS_{312}$ \\
$2\ov{1}\ov{3}=s_0s_2s_1s_0s_1s_2s_1$ & $\wt{Q}_{32}\,\AS_{231}
-\wt{Q}_{31}\,\AS_{321}$ \\

$\ov{3}\ov{2}1 = s_1s_0s_2s_1s_0$ & $\wt{Q}_{32}$ \\
$\ov{2}\ov{3}1 = s_1s_0s_2s_1s_0s_1$ & $-\wt{Q}_{32}\,\AS_{213}$ \\
$\ov{3}1\ov{2}=s_1s_0s_2s_1s_0s_2$ & $-\wt{Q}_{32}\,\AS_{132}$ \\
$\ov{2}1\ov{3}=s_1s_0s_2s_1s_0s_1s_2$ & $\wt{Q}_{32}\,\AS_{231}$ \\
$1\ov{3}\ov{2}=s_1s_0s_2s_1s_0s_2s_1$ & $\wt{Q}_{32}\,\AS_{312}$ \\
$1\ov{2}\ov{3}=s_1s_0s_2s_1s_0s_1s_2s_1$ & $-\wt{Q}_{32}\,\AS_{321}$ \\

$\ov{3}\ov{2}\ov{1}=s_0s_1s_0s_2s_1s_0$ & $\wt{Q}_{321}$ \\
$\ov{2}\ov{3}\ov{1}=s_0s_1s_0s_2s_1s_0s_1$ & $-\wt{Q}_{321}\,\AS_{213}$ \\
$\ov{3}\ov{1}\ov{2}=s_0s_1s_0s_2s_1s_0s_2$ & $-\wt{Q}_{321}\,\AS_{132}$ \\
$\ov{2}\ov{1}\ov{3}=s_0s_1s_0s_2s_1s_0s_1s_2$ & $\wt{Q}_{321}\,\AS_{231}$ \\
$\ov{1}\ov{3}\ov{2}=s_0s_1s_0s_2s_1s_0s_2s_1$ & $\wt{Q}_{321}\,\AS_{312}$ \\
$\ov{1}\ov{2}\ov{3}=s_0s_1s_0s_2s_1s_0s_1s_2s_1$ & 
$-\wt{Q}_{321}\,\AS_{321}$ \\
\hline
\end{tabular} 
\label{schubtable}
\end{table}}}

\section{Hermitian differential geometry}
\label{hdg}

\subsection{Bott-Chern forms}
\label{bcf}

In this section $X$ denotes a complex manifold, and
$A^{p,q}(X)$ is the space of $\C$-valued smooth
differential forms of type $(p,q)$ on $X$. Let
$A(X)=\bigoplus_p A^{p,p}(X)$ and $A'(X)\subset A(X)$ be the
set of forms $\varphi$ in $A(X)$ which can be written as
$\varphi=\partial \eta + \dbar \eta'$ for some smooth forms 
$\eta$, $\eta'$. Define $\wt{A}(X)=A(X)/A'(X)$. Observe that 
the operator $dd^c:\wt{A}(X)\ra A(X)$ is well defined, as is 
the  cup product $\wedge\omega: \wt{A}(X)\ra\wt{A}(X)$ for 
any closed form $\omega$ in $A(X)$.

A {\em hermitian vector bundle} on $X$ is a pair $\ov{E}=(E,h)$
consisting of a holomorphic vector bundle $E$ over $X$ and a hermitian
metric $h$ on $E$. Let $D$ be the hermitian holomorphic connection of
$\ov{E}$, with curvature $K=D^2\in A^{1,1}(X,\End(E))$, and let $n$
denote the rank of $E$.  For each integer $k$ with $1\leq k\leq n$,
there is an associated Chern form
$c_k(\ov{E}):=\Tr(\bigwedge^k(\frac{i}{2\pi}K))\in A^{k,k}(X)$,
defined locally by identifying $\End(E)$ with $M_n(\C)$. We also have
the total Chern form $c(\ov{E}) = 1+\sum_{k=1}^n c_k(\ov{E})$. These
differential forms are $d$ and $d^c$ closed, and their classes in the
de Rham cohomology of $X$ are the usual Chern classes of $E$.

Let $\r=(1\leq r_1<r_2<\ldots <r_m=n)$
be an increasing sequence of natural numbers.  
A {\em hermitian filtration $\ov{\E}$ of
type $\r$} is a filtration
\begin{equation}
\label{hfil}
\E:\ 0=E_0 \subset E_1 \subset E_2 \subset\cdots \subset E_m=E
\end{equation}
of $E$ by subbundles $E_i$ with $\mathrm{rank}(E_i)=r_i$ 
for $1\leq i \leq m$, together with a choice of hermitian metrics
on $E$ and on each quotient bundle $Q_i=E_i/E_{i-1}$. We say that 
$\ov{\E}$ is {\em split} if, when $E_i$ is given the induced metric 
from $E$ for each $i$,
the sequences
\[
\ov{\E}_i \, :\, 0\ra \ov{E}_{i-1} \ra \ov{E}_i
\ra \ov{Q}_i \ra 0
\]
are split, for $1\leq i \leq m$. In this case we have an orthogonal
splitting $\dis \ov{E}=\bigoplus_{i=1}^m\ov{Q}_i$.

In \cite[Theorem 1]{T2} we showed that there is a 
unique way to attach to every hermitian
filtration of type $\r$ a form $\wt{c}(\ov{\E})$ in
$\wt{A}(X)$ in such a way that:

(i) $\dis dd^c\wt{c}(\ov{\E})=
c(\bigoplus_{i=1}^m \ov{Q}_i)-c(\ov{E})$,

(ii) For every map $f:Y\ra X$ of complex manifolds,
        $\wt{c}(f^*(\ov{\E}))=f^*\wt{c}(\ov{\E})$,

\medskip

(iii) If $\ov{\E}$ is {\em split}, then $\wt{c}(\ov{\E})=0$.

\medskip
\noin
The differential form $\wt{c}(\ov{\E})$ is called the {\em Bott-Chern
form} of the hermitian filtration $\E$ corresponding to the total
Chern class. If $m=2$, i.e., if the filtration $\E$ has length two,
then $\wt{c}(\ov{\E})$ coincides with the Bott-Chern class $\wt{c}(0
\ra \ov{Q}_1 \ra \ov{E} \ra \ov{Q}_2 \ra 0)$ defined in \cite{BC,
GS2}.

Suppose we are given a hermitian vector bundle $\ov{E}$ and a
filtration $\E$ of $E$ by subbundles as in (\ref{hfil}). Assume that
the subbundles $E_i$ are given metrics induced from the hermitian
metric on $E$ and that the quotient bundles $Q_i$ are then given the
metrics induced from the exact sequences $\ov{\E}_i$.  Consider a
local holomorphic orthonormal frame for $E$ such that the first $r_i$
elements generate $E_i$, and let $K(\ov{E}_i)$ and $K(\ov{Q}_i)$ be
the curvature matrices of $\ov{E}_i$ and $\ov{Q}_i$ with respect to
the chosen frame. Let $K_{E_i}=\frac{i}{2\pi}K(\ov{E}_i)$ and
$K_{Q_i}=\frac{i}{2\pi}K(\ov{Q}_i)$. Then we have the following
result.

\begin{prop}[T2]
\label{ratbcf}
The Bott-Chern form $\wt{c}(\ov{\E})$ is a polynomial in the entries
of the matrices $K_{E_i}$ and $K_{Q_i}$, $1\leq i \leq m$, with {\em
rational} coefficients. 
\end{prop}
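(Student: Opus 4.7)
The plan is to reduce to the case of a short exact sequence (length $m=2$) and then pass to general $m$ by induction on the length. By the uniqueness characterization in \cite[Theorem 1]{T2}, any form in $\wt{A}(X)$ satisfying axioms (i)--(iii) equals $\wt{c}(\ov{\E})$, so it suffices to exhibit any representative that is polynomial with rational coefficients in the entries of the matrices $K_{E_i}$ and $K_{Q_i}$.

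For the length-two case $\ov{\E} : 0 \to \ov{S} \to \ov{E} \to \ov{Q} \to 0$, I would invoke the Bott--Chern transgression formula of \cite{BC,GS2}. Choose a $C^{\infty}$ orthogonal splitting $E = S \oplus Q$ and interpolate linearly in $t\in[0,1]$ between the given metric on $\ov{E}$ and the direct-sum metric coming from $\ov{S}$ and $\ov{Q}$, obtaining a family of Chern connections $D_t$ with curvatures $K_t$. In a local orthonormal frame adapted to the splitting, the entries of $K_t$ and of $h_t^{-1}\dot h_t$ are polynomials, with $t$-polynomial coefficients, in the entries of $K_E$, $K_S$, $K_Q$, and of the second fundamental form $A$ of $S\subset E$. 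The transgression formula expresses $\wt{c}(\ov{\E})$ as the integral over $[0,1]$ of such an expression; since $\int_0^1 t^k\,dt = 1/(k+1)\in\Q$, the result is polynomial in the entries of $K_E$, $K_S$, $K_Q$ and $A$ with rational coefficients. The second fundamental form enters only through the $(1,1)$-forms $A\wedge A^*$ and $A^*\wedge A$, whose entries are polynomial expressions in the entries of $K_E - (K_S\oplus K_Q)$ by the Gauss equation, so $A$ can be eliminated in favor of curvature entries.

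For the inductive step with $m\geq 3$, decompose $\ov{\E}$ by pairing the two-step filtration $\ov{\F}: 0 \subset \ov{E}_{m-1}' \subset \ov{E}$, where $\ov{E}_{m-1}'$ carries the metric induced from $\ov{E}$, with the truncated filtration $\ov{\E}''$ of length $m-1$ sitting inside $\ov{E}_{m-1}'$. The axioms (i)--(iii), Leibniz for $dd^c$ modulo $A'(X)$, and multiplicativity of the total Chern form on orthogonal direct sums force, by uniqueness, a recursion of the form
$$\wt{c}(\ov{\E}) \;=\; \wt{c}(\ov{\F})\cdot c\!\left(\bigoplus_{i=1}^{m-1}\ov{Q}_i\right) + c(\ov{E}_{m-1}')\cdot \wt{c}(\ov{\E}'') + (\text{change-of-metric correction on } E_{m-1}),$$
in which the correction is itself a length-two Bott--Chern form comparing two hermitian metrics on the single bundle $E_{m-1}$. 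By the length-two case and the induction hypothesis each term on the right is a rational polynomial in the relevant curvature entries, which proves the proposition.

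The main obstacle I expect is the bookkeeping in the inductive step: the metric induced on $E_{m-1}$ from $\ov{E}$ is generally different from the metric specified on $\ov{E}_{m-1}$ in the truncated filtration $\ov{\E}''$, so the change-of-metric correction is indispensable and one must verify that the three axioms truly pin down the recursion stated above. Once the recursion is correctly formulated, rationality is automatic: at every stage of the argument the only transcendental operation performed is an integration $\int_0^1 f(t)\,dt$ of a polynomial $f$ in $t$ with rational coefficients, which stays inside $\Q$.
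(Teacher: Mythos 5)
Your inductive recursion is not the correct one, and this is the central gap. The formula that actually holds (recalled in the paper immediately after the Proposition) is
\[
\wt{c}(\ov{\E}) \;=\; \sum_{i=2}^m\,\wt{c}(\ov{\E}_i)\wedge
c(\ov{Q}_{i+1})\wedge\cdots\wedge c(\ov{Q}_m),
\]
where $\ov{\E}_i : 0 \to \ov{E}_{i-1}\to \ov{E}_i \to \ov{Q}_i\to 0$.
Equivalently, as a recursion in the length,
$\wt{c}(\ov{\E}) = \wt{c}(\ov{\E}_m) + \wt{c}(\ov{\E}^{(m-1)})\wedge c(\ov{Q}_m)$, where $\ov{\E}^{(m-1)}$ is the truncated filtration sitting inside $\ov{E}_{m-1}$.
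One checks axiom (i) by telescoping: writing $a_i = c(\ov{E}_i)\,c(\ov{Q}_{i+1})\cdots c(\ov{Q}_m)$, the $i$-th summand has $dd^c$ equal to $a_{i-1}-a_i$, so the sum over $i$ collapses to $a_1 - a_m = c\bigl(\bigoplus_i \ov{Q}_i\bigr) - c(\ov{E})$. If you run the same check on your proposed formula
\[
\wt{c}(\ov{\E}) = \wt{c}(\ov{\F})\cdot c\Bigl(\textstyle\bigoplus_{i<m}\ov{Q}_i\Bigr) + c(\ov{E}_{m-1}')\cdot\wt{c}(\ov{\E}'') + (\text{correction}),
\]
the first two terms produce spurious products like $c(\ov{E}_{m-1})^2$ and $c(\ov{E})\,c(\bigoplus_{i<m}\ov{Q}_i)$ that cannot be absorbed by a length-two change-of-metric correction on $E_{m-1}$ alone, so the ``uniqueness forces it'' argument does not go through. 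Moreover, the change-of-metric term you were worried about simply does not arise here: the Proposition is stated precisely for the case where every $E_i$ carries the metric induced from $\ov{E}$, hence $E_{m-1}$ has the same metric whether viewed inside the full filtration or as the ambient bundle of the truncated one.

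The base case $m=2$ is also not quite as you describe. If you take the $C^\infty$ \emph{orthogonal} splitting $E = S \oplus S^\perp$, then the ``direct-sum metric'' on $S\oplus Q$ is literally $h_E$ itself; what differs between $\ov{E}$ and $\ov{S}\oplus\ov{Q}$ is the holomorphic structure (encoded by the second fundamental form $A\in A^{0,1}(\Hom(S,Q))$), not the hermitian metric. The transgression should thus be along the family of Chern connections for $\bar\partial_t = \bar\partial_{S\oplus Q} + tA$, not along a convex combination of metrics $h_t = (1-t)h_0 + th_1$ (for which $h_t^{-1}$ is a rational, not polynomial, function of $t$). With the connection path, $K_t$ is indeed quadratic in $t$, $\int_0^1(\cdot)\,dt$ stays in $\Q$, and the $A$, $A^*$ entries occur in balanced pairs expressible via the Gauss equation through the curvature entries, so rationality can be recovered. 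The paper sidesteps this entirely by appealing to the explicit closed formulas of \cite[\S 3]{T2} for the short-exact-sequence Bott-Chern form; for flat $\ov{E}$ these specialize to the harmonic-number formula (\ref{ses}), whose rationality is visible at a glance.
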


The polynomial in Proposition \ref{ratbcf} may be computed by an 
explicit algorithm. In fact, one has the equation
\[
\wt{c}(\ov{\E}) = \sum_{i=2}^m\,\wt{c}(\ov{\E}_i)\wedge
c(\ov{Q}_{i+1})\wedge\cdots\wedge c(\ov{Q}_m)
\]
and the Bott-Chern forms $\wt{c}(\ov{\E}_i)$ can be evaluated using
the formulas in \cite[\S 3]{T2}. In particular, we have
$\wt{c}_1(\ov{\E}) = 0$ and $\dis \wt{c}_2(\ov{\E}) =
\sum_{i=2}^m\wt{c}_2(\ov{\E}_i)$, while $\wt{c}_p(\ov{\E})=0$ for $p>
\text{rank}(E)$.

Suppose that $\ov{E}$ is flat and $m=2$, so that $\ov{\E}$ is
equivalent to a short exact sequence
\[
\ov{\E} \ :\ 0 \to \ov{E}_1 \to \ov{E} \to \ov{Q}_1 \to 0
\]
of hermitian vector bundles, with metrics induced from $\ov{E}$. In this
situation, according to \cite[Prop.\ 3]{T1}, we have 
\begin{equation}
\label{ses}
\wt{c}_k(\ov{\E}) = (-1)^{k-1}\H_{k-1}p_{k-1}(\ov{Q}_1).
\end{equation}
Here $p_r(\ov{Q}_1) = \Tr((K_{Q_1})^r)$ denotes the $r$-th
power sum form of $\ov{Q}_1$, while $\H_0=0$ and $\H_r =
1+\frac{1}{2}+\cdots + \frac{1}{r}$ is a harmonic number for $r>0$.

\subsection{Curvature of homogeneous vector bundles}
\label{hvb}

To simplify the notation in this section, we will redefine the group
$\Sp_{2n}(\C)$ using the {\em standard symplectic form} $[\ \,,\ ]'$
on $\C^{2n}$ whose matrix $[e_i,e_j ]_{i,j}'$ on unit coordinate
vectors is $\left(\begin{array}{cc} 0 & \Id_n \\ -\Id_n & 0
\end{array}\right)$, where $\Id_n$ denotes the $n\times n$ identity
matrix.  Let $\X=\Sp_{2n}/B$ be the symplectic flag variety and
$E_\bull$ its tautological complete isotropic flag of vector bundles.
We equip the trivial vector bundle $E_{2n}=\C^{2n}_\X$ with the
trivial hermitian metric $h$ compatible with the symplectic form $[\,\
,\ ]'$ on $\C^{2n}$. More precisely, view the quaternion algebra
${\mathbb H}$ as $\C\oplus j\C$ and $\C^{2n}=\C^n\oplus j\C^n$ as a
right ${\mathbb H}$-module.  Then the metric $h$ may be defined by the
equation $h(v,w)=-[vj,w]'$ (see for example \cite[\S 7.2]{FH}).

The metric on $E$ induces metrics on all the subbundles $E_i$ and the
quotient line bundles $Q_i=E_i/E_{i-1}$, for $1\leq i\leq n$.  Our
goal here is to compute the $\Sp(2n)$-invariant curvature matrices of
the homogeneous vector bundles $\ov{E}_i$ and $\ov{Q}_i$ for $1\leq i
\leq n$.  Following \cite[\S 4]{GrS} and \cite[\S 5]{T2}, this may be
done by pulling back these matrices of $(1,1)$-forms from $\X$ to the
compact Lie group $\Sp(2n)$, where their entries may be expressed in
terms of the basic invariant forms on $\Sp(2n)$.

The Lie algebra of $\Sp_{2n}(\C)$ is given by 
$${\mathfrak s}{\mathfrak p}(2n,\C)=\{(A,B,C)\ |\ A,B,C \in
M_n(\C), \ B,C \mbox{ symmetric} \},$$ 
where $(A,B,C)$ denotes the matrix 
$\left(
\begin{array}{cc}
  A & B \\
  C & -A^t
\end{array}
\right)$.  Complex conjugation of the algebra ${\mathfrak s}{\mathfrak
p}(2n,\C)$ with respect to the Lie algebra of $\Sp(2n)$ is given by
the map $\tau$ with $\tau(A)=-\ov{A}^t$.  The Cartan subalgebra
$\mathfrak{h}$ consists of all matrices of the form
$\{(\mathrm{diag}(t_1,\ldots,t_n),0,0)\ |\ t_i\in \C\}$, where
$\mathrm{diag}(t_1,\ldots,t_n)$ denotes a diagonal matrix. Consider
the set of roots
\[
R = \{ \pm t_i \pm t_j\ |\ i \neq j \} \cup
\{\pm 2t_i\} \subset {\mathfrak h}^*
\]
and a system of positive roots 
\[
R^+=\{t_i-t_j\ |\ i<j\}\cup \{ t_p+t_q\ |\ p\leq q \},
\]
where the indices run from $1$ to $n$.  We use $ij$ to denote a
positive root in the first set and $pq$ for a positive root in the
second. The corresponding basis vectors are $e_{ij}=(E_{ij},0,0)$,
$e^{pq}=(0,E_{pq}+E_{qp},0)$ for $p< q$, and $e^{pp}=(0,E_{pp},0)$,
where $E_{ij}$ is the matrix with $1$ as the $ij$-th entry and zeroes
elsewhere.

Define $\ov{e}_{ij} = \tau(e_{ij})$, $\ov{e}^{pq} = \tau(e^{pq})$, and
consider the linearly independent set 
\[
\B'=\{e_{ij},\, \ov{e}_{ij},\, e^{pq},\, \ov{e}^{pq}\ |\ i<j ,\ p\leq q\}.
\]  
The adjoint representation of ${\mathfrak h}$ on ${\mathfrak
s}{\mathfrak p}(2n,\C)$ gives a root space decomposition
\[
{\mathfrak s}{\mathfrak p}(2n,\C) =
{\mathfrak h}\oplus \sum_{i<j}(\C\, e_{ij}\oplus \C \,\ov{e}_{ij})
\oplus \sum_{p\leq q}(\C \,e^{pq}\oplus \C \,\ov{e}^{pq}).
\]
Extend $\B'$ to a basis $\B$ of ${\mathfrak s}{\mathfrak p}(2n,\C)$ and
let $\B^*$ denote the dual basis of ${\mathfrak s}{\mathfrak
p}(2n,\C)^*$. Let $\ome^{ij}$, $\ov{\ome}^{ij}$, $\ome_{pq}$,
$\ov{\ome}_{pq}$ be the vectors in $\B^*$ which are dual to $e_{ij}$,
$\ov{e}_{ij}$, $e^{pq}$, $\ov{e}^{pq}$, respectively; we regard these
elements as left invariant complex one-forms on $\Sp(2n)$. If $p>q$ we
agree that $\ome_{pq} = \ome_{qp}$ and $\ov{\ome}_{pq} = \ov{\ome}_{qp}$.
Finally, define $\ome_{ij}=\gamma\ome^{ij}$,
$\ov{\ome}_{ij}=\gamma\ov{\ome}^{ij}$, $\ome^{pq}=\gamma\ome_{pq}$,
and $\ov{\ome}^{pq}=\gamma\ov{\ome}_{pq}$, where $\gamma$ is a
constant such that $\gamma^2=\frac{i}{2\pi}$, and set $\Om_{ij} =
\ome_{ij}\wedge \ov{\ome}_{ij}$ and $\Om ^{pq} =
\ome^{pq}\wedge\ov{\ome}^{pq}$.

If $\pi:\Sp(2n)\to \X$ denotes the quotient map, the pullbacks of the
aforementioned curvature matrices under $\pi$ can now be written 
explicitly, following \cite[$(4.13)_X$]{GrS} and \cite[\S 5]{T2}.
The result is recorded in the following proposition.

\begin{prop}
\label{grsprop} For every $k$ with $1\leq k \leq n$ we have
\[
c_1(\ov{Q}_k)=\sum_{i<k}\Omega_{ik}-
\sum_{j>k}\Omega_{kj}-
\sum_{p=1}^n\Omega^{pk}
\]
and $K_{E_k}=\{\Theta_{\a\b}\}_{1\leq \a,\b\leq k}$, where
\[
\dis
\Theta_{\a\b}=-\sum_{j>k}\ome_{\a j}\wedge\ov{\ome}_{\b j}
-\sum_{p=1}^n\ome^{p\a}\wedge\ov{\ome}^{p\b}.
\]
\end{prop}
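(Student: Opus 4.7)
The plan is to implement the Griffiths--Schmid recipe from \cite[\S 4]{GrS} in the symplectic setting, exactly as was done for the $\SL_n$ flag variety in \cite[\S 5]{T2}. The crucial input is that $\ov{E}_{2n}$ is holomorphically trivial with flat hermitian metric, so the hermitian holomorphic connection on each subbundle $\ov{E}_k$ (with induced metric) is the orthogonal projection of the trivial connection, and its curvature is captured entirely by the second fundamental form. For a local orthonormal frame $s_1,\ldots,s_k$ of $E_k$ extended to an orthonormal frame $s_1,\ldots,s_{2n}$ of $E_{2n}$, one has the standard formula
\[
K_{E_k}^{\a\b}\;=\;-\sum_{\gamma>k} A^\a_\gamma\wedge\overline{A^\b_\gamma},
\]
where $A^\a_\gamma$ is the $(1,0)$-form measuring the motion of $s_\a$ transverse to $E_k$ in the direction of $s_\gamma$.

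To make $A^\a_\gamma$ explicit, I would pull back via $\pi:\Sp(2n)\to\X$ and reduce by $\Sp(2n)$-equivariance to the identity coset. An infinitesimal element $(A,B,C)\in\mathfrak{sp}(2n,\C)$ sends $e_\a$ to $e_\a+\sum_i A_{i\a}\,e_i+\sum_i C_{i\a}\,e_{n+i}$, so for $\a\le k\le n$ the components transverse to $E_k$ are $A_{\gamma\a}$ for $k<\gamma\le n$ (corresponding to the negative root vector $\ov{e}_{\a\gamma}$, of root $t_\gamma-t_\a$) and $C_{p\a}$ for $1\le p\le n$ with $\gamma=n+p$ (corresponding to the negative root vector $\ov{e}^{p\a}$, of root $-(t_\a+t_p)$, living in the symmetric $C$-block where $\ome^{pq}=\ome^{qp}$). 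After the $\gamma$-rescaling built into the definitions of $\ome_{ij}$ and $\ome^{pq}$, the pullback of $A^\a_\gamma$ identifies with $\ome_{\a\gamma}$ or $\ome^{p\a}$ respectively. Substituting back, and using that complex conjugation interchanges $\ome\leftrightarrow\ov{\ome}$ on the compact form $\Sp(2n)$, yields the claimed identity for $\Theta_{\a\b}$.

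The first Chern form of $\ov{Q}_k$ then follows by comparing traces:
\[
c_1(\ov{Q}_k)\;=\;c_1(\ov{E}_k)-c_1(\ov{E}_{k-1})\;=\;\Tr K_{E_k}-\Tr K_{E_{k-1}}.
\]
Writing $\Theta^{(m)}_{\a\a}=-\sum_{j>m}\Omega_{\a j}-\sum_p \Omega^{p\a}$ for the diagonal entries of $K_{E_m}$, the telescoping contributes $\Theta^{(k)}_{\a\a}-\Theta^{(k-1)}_{\a\a}=\Omega_{\a k}$ for each $\a<k$ (the $j=k$ term being the only difference between the two first sums), while the new index $\a=k$ supplies $\Theta^{(k)}_{kk}=-\sum_{j>k}\Omega_{kj}-\sum_p\Omega^{pk}$ in full. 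Adding these produces the asserted expression for $c_1(\ov{Q}_k)$.

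The principal technical obstacle is the careful bookkeeping required to identify each second fundamental form entry with the correctly $\gamma$-scaled dual basis form, handling properly the signs arising from complex conjugation, the symmetrization $\ome^{pq}=\ome^{qp}$ in the $B$- and $C$-blocks, and the passage between the standard and skew diagonal realizations of the symplectic form. Once this dictionary is fixed as in \cite[\S 4]{GrS}, the proposition follows by direct substitution.
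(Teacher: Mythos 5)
Your proposal is essentially correct and takes the same route as the paper, which does not spell out a proof of Proposition~\ref{grsprop} but instead cites the Griffiths--Schmid formula $(4.13)_X$ and the type A computation in \cite[\S 5]{T2} and records the result. You correctly reduce the curvature of the induced metric on $\ov{E}_k$, inside the flat ambient bundle $\ov{E}$, to the second fundamental form; you pull back by $\pi:\Sp(2n)\to\X$ and use equivariance to work at the identity coset; you identify the transverse directions with the $A$-block entries $(\gamma,\a)$ with $k<\gamma\le n$ and the symmetric $C$-block entries $(p,\a)$; and you obtain $c_1(\ov{Q}_k)$ from $\Tr K_{E_k}-\Tr K_{E_{k-1}}$ via the telescoping you describe, which is a correct calculation (the $j=k$ term is exactly what differs for $\a<k$, and the new diagonal entry $\Theta_{kk}$ supplies the remaining two sums). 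The one place I would flag for care is your sentence ``the pullback of $A^\a_\gamma$ identifies with $\ome_{\a\gamma}$'': strictly, the coefficient of the \emph{negative} root vector $\ov{e}_{\a\gamma}$ (resp.\ $\ov{e}^{p\a}$) in the Maurer--Cartan form is the dual element $\ov{\ome}^{\a\gamma}$ (resp.\ $\ov{\ome}_{p\a}$), and passing to $\ome_{\a\gamma}=\gamma\ome^{\a\gamma}$ uses the complex conjugation relation on the compact group $\Sp(2n)$ together with the $\gamma$-rescaling; you acknowledge this is where the bookkeeping lives, but the phrase as written compresses a conjugation step. Since the $(1,1)$-curvature at the end involves only wedge products of the form $\omega\wedge\ov{\omega}$, this does not change the result, but it is worth stating explicitly if you want a self-contained proof.
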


\medskip

 Let $\dis\Omega=\bigwedge_{i<j} \Omega_{ij}\wedge\bigwedge_{p\leq q}
\Omega^{pq}$.  Since the class of a point in $\X$ is Poincar\'e dual
to $\dis \prod_{k=1}^n c_1(\ov{Q}_k^*)^{2n-2k+1}$ (see e.g.\
\cite[Cor.\ 5.6]{PR}) we deduce that $\dis\int_\X\Omega=
\prod_{k=1}^n\frac{1}{(2k-1)!}$.

\section{Arithmetic intersection theory on $\Sp_{2n}/B$}
\label{ait}

\subsection{Symplectic flag varieties over $\Spec\Z$}
\label{classgps}

For the rest of this paper, $\X$ will denote the Chevalley scheme over
$\Z$ for the homogeneous space $\Sp_{2n}/B$ described in \S
\ref{spflag}. The scheme $\X$ parametrizes complete isotropic flags
$E_\bull$ of a $2n$-dimensional vector space $E$ equipped with the
skew diagonal symplectic form, over any base field. The arithmetic
symplectic flag variety $\X$ is smooth over $\Spec \Z$, and has a
decomposition into Schubert cells induced by the Bruhat decomposition
of $\Sp_{2n}$ (see e.g.\ \cite[\S 13.3]{Ja} for details).

There is a tautological complete isotropic flag of vector bundles
\[
E_\bull :\ 0=E_0\subset E_1\subset\cdots\subset E_{2n}=E
\]
over $\X$. For each $i$ with $1\leq i\leq 2n$ we let $\E_i$ denote the
short exact sequence
\[
\E_i\ :\ 0 \to E_{i-1}\to E_i \to Q_i\to 0.
\]
If $\CH(\X)$ is the Chow ring of algebraic cycles on $\X$ modulo
rational equivalence, then the class map induces an isomorphism
$\CH(\X)\cong \HH^*(\X(\C),\Z)$, following \cite[Ex.\ 19.1.11]{F4} and
\cite[Lem.\ 6]{KM}.  We deduce that there is a ring presentation
$\CH(\X)\cong \Z[\rX_n]/I_n$. The relations $e_i(\rX_n^2)$ in $I_n$
come from the Whitney sum formula applied to the filtration
$E_\bull$. This gives a Chern class equation
$\prod_{i=1}^{2n}(1+c_1(Q_i))=c(E)$ in $\CH(\X)$, which maps to the
identity $\prod_{i=1}^{2n}(1-\x_i^2) = 1$, since $E$ is a trivial
bundle.

We have an isomorphism of abelian groups 
\[
\CH(\X) \cong \bigoplus_{w\in W_n}\Z\,\CS_w(\rX_n)
\]
where the polynomial $\CS_w(\rX_n)$ represents the class of the
codimension $\ell(w)$ Schubert scheme $\X_w$ in $\X$. The latter
is defined as the closure of the corresponding Schubert cell, so 
that $\X_w(\C)$ is given as in \S \ref{spflag}.

\subsection{The arithmetic Chow group}
\label{acg}

For $p\geq 0$ we let $\wh{\CH}^p(\X)$ denote the $p$-th arithmetic
Chow group of $\X$, in the sense of Gillet and Soul\'e \cite{GS1}. 
The elements in $\wh{\CH}^p(\X)$ are represented by
arithmetic cycles $(Z,g_Z)$, where $Z$ is a codimension $p$ cycle
on $\X$ and $g_Z$ is a current of type $(p-1,p-1)$ such that the current
$dd^cg_Z+\delta_{Z(\C)}$ is represented by a smooth differential form on
$\X(\C)$. 

We let $F_{\infty}$ be the involution of $\X(\C)$ induced by complex 
conjugation. Let $A^{p,p}(\X_{\R})$ be the subspace of $A^{p,p}(\X(\C))$ 
generated by real forms $\eta$ such that $F^*_{\infty}\eta=(-1)^p\eta$;
denote by $\wt{A}^{p,p}(\X_{\R})$
the image of $A^{p,p}(\X_{\R})$ in $\wt{A}^{p,p}(\X(\C))$.
Let $A(\X_{\R})=\bigoplus_p A^{p,p}(\X_{\R})$ and
$\wt{A}(\X_{\R})=\bigoplus_p \wt{A}^{p,p}(\X_{\R})$.

Since the homogeneous space $\X$ admits a cellular decomposition, it
follows as in e.g.\ \cite{KM} that for each $p$, there is an exact
sequence
\begin{equation}
\label{sesprelim}
0 \longrightarrow  \wt{A}^{p-1,p-1}(\X_{\R})
\stackrel{a}\longrightarrow \wh{\CH}^p(\X)
\stackrel{\zeta}\longrightarrow \CH^p(\X)\longrightarrow 0,
\end{equation}
where the maps $a$ and $\zeta$ are defined by
\[
\dis
a(\eta)=(0,\eta) \qquad \textrm{ and } \qquad
\zeta(Z,g_Z)=Z.
\]
Summing (\ref{sesprelim}) over all $p$ gives the sequence
\begin{equation}
\label{seseq}
0 \longrightarrow  \wt{A}(\X_{\R})
\stackrel{a}\longrightarrow \wh{\CH}(\X)
\stackrel{\zeta}\longrightarrow \CH(\X)\longrightarrow 0.
\end{equation}

We equip $E(\C)$ with the trivial hermitian metric compatible with the
skew diagonal symplectic form $[\,\ ,\ ]$ on $\C^{2n}$.  This metric
induces metrics on (the complex points of) all the vector bundles
$E_i$ and the line bundles $L_i=E_{n+1-i}/E_{n-i}$, for $1\leq i\leq
n$. We thus obtain hermitian vector bundles $\ov{E}_i$ and line
bundles $\ov{L}_i$, together with their arithmetic Chern classes
$\wh{c}_k(\ov{E}_i)\in \wh{\CH}^k(\X)$ and $\wh{c}_1(\ov{L}_i)\in
\wh{\CH}^1(\X)$, according to \cite{GS2}. Set $\wh{x}_i =
-\wh{c}_1(\ov{L}_i)$ and for any $w\in W_n$, define
\[
\wh{\CS}_w := \CS_w(\wh{x}_1,\ldots,\wh{x}_n)\in \wh{\CH}^{\ell(w)}(\X).
\]
The unique map of abelian groups
\begin{equation}
\label{splittingmap}
\epsilon\,:\,\CH(\X)\ra \wh{\CH}(\X)
\end{equation}
sending the Schubert class $\CS_w(\rX_n)$ to $\wh{\CS}_w$ for all
$w\in W_n$ is then a splitting of (\ref{seseq}). We thus obtain an
isomorphism of abelian groups
\begin{equation}
\label{bigiso}
\wh{\CH}(\X)\cong \CH(\X)\oplus \wt{A}(\X_{\R}).
\end{equation}

\subsection{Computing arithmetic intersections}
\label{compaa}
We now describe an effective procedure for computing arithmetic Chern
numbers on the symplectic flag variety $\X$, parallel to \cite[\S
7]{T2}.  Let $c_k(\ov{E}_i)$ and $c_1(\ov{L}_i)$ be the Chern forms of
$\ov{E_i(\C)}$ and $\ov{L_i(\C)}$, respectively. In the sequel we will
identify these with their images in $\wh{\CH}(\X)$ under the inclusion
$a$.  Let $x_i = -c_1(\ov{L}_i)$ for $1\leq i \leq n$.

We begin with the short exact sequence
\[
\ov{\E}_{\LG}\ :\ 0 \to \ov{E}_n \to \ov{E} \to \ov{E}_n^* \to 0
\]
where $E_n$ denotes the tautological Lagrangian subbundle of $E$ over
$\X$. By \cite[Theorem 4.8(ii)]{GS2}, we have an equation
\begin{equation}
\label{key0}
\wh{c}(\ov{E}_n)\, \wh{c}(\ov{E}^*_n) = 1 + \wt{c}(\ov{\E}_{\LG})
\end{equation}
in $\wh{\CH}(\X)$. Consider the hermitian filtration
\[
\ov{\E} \ : \ 0 = \ov{E}_0 \subset \ov{E}_1 \subset \cdots
\subset \ov{E}_n.
\]
According to \cite[Theorem 2]{T2}, we have an equation
\begin{equation}
\label{key1}
\prod_{i=1}^n(1-\wh{x}_i) = \wh{c}(\ov{E}_n) + \wt{c}(\ov{\E}).
\end{equation}
If $\wt{c}(\ov{\E}) = \sum_i\alpha_i$ with 
$\alpha_i\in \wt{A}^{i,i}(\X_\R)$ for each $i$, then define
$\wt{c}(\ov{\E}^*) = \sum_i(-1)^{i+1}\alpha_i$. We obtain the dual equation
\begin{equation}
\label{key2}
\prod_{i=1}^n(1+\wh{x}_i) = \wh{c}(\ov{E}^*_n) + \wt{c}(\ov{\E}^*).
\end{equation}

The abelian group  $\wt{A}(\X_{\R})=\Ker\zeta$ is an ideal of $\wh{\CH}(\X)$
such that for any hermitian vector bundle $\ov{F}$ over $\X$ and 
$\eta,\eta'\in \wt{A}(\X_{\R})$, we have 
\begin{equation}
\label{modstr}
\wh{c}_k(\ov{F}) \cdot \eta = c_k(\ov{F}) \wedge \eta 
\qquad \text{and} \qquad
\eta\cdot \eta' = (dd^c\eta)\wedge\eta'.
\end{equation}
We now multiply (\ref{key1}) with (\ref{key2}) and combine the result with
(\ref{key0}) to obtain
\begin{equation}
\label{key}
\prod_{i=1}^n(1-\wh{x}^2_i) = 1 + \wt{c}(\ov{\E}, \ov{\E}^*), 
\end{equation}
where 
\begin{equation}
\label{keypart}
\wt{c}(\ov{\E}, \ov{\E}^*) = \wt{c}(\ov{\E}_{\LG}) + \wt{c}(\ov{\E})\wedge
c(\ov{E}^*_n) + \wt{c}(\ov{\E}^*)\wedge c(\ov{E}_n) +
(dd^c\wt{c}(\ov{\E}))\wedge \wt{c}(\ov{\E}^*).
\end{equation}

Using (\ref{ses}) and Proposition \ref{ratbcf}, we can express the
differential form $\wt{c}(\ov{\E}, \ov{\E}^*)$ as a polynomial in the
entries of the matrices $K_{E_i}$ and $K_{L_i}$ with rational
coefficients.  On the other hand, Proposition \ref{grsprop} gives
explicit formulas for all these curvature matrices in terms of
$\Sp(2n)$-invariant differential forms on $\X(\C)$. Note that since we
are using the skew diagonal symplectic form to define the Lie groups
in this section, the formulas in \S \ref{hvb} have to be modified
accordingly. For the matrix realization of the Lie algebra ${\mathfrak
s}{\mathfrak p}(2n,\C)$ in this case, one may consult e.g. \cite[\S
1.2, \S 2.3]{GW}, while the basis elements of $\mathfrak{h}$ should be
ordered as in \cite[(2.20)]{BH}. The indices $(i,j)$ and $(p,q)$ in
Proposition \ref{grsprop} are then replaced by $(n+1-j,n+1-i)$ and
$(n+1-q,n+1-p)$, respectively. Recalling that $L_i=E_{n+1-i}/E_{n-i}$,
we have the identities
\begin{align*}
x_1 &= -\Om_{12} - \Om_{13}- \cdots - \Om_{1n} + \Om^{11} + \Om^{12} +
\cdots + \Om^{1n} \\ 
x_2 &= \hspace{0.27cm} \Om_{12} - \Om_{23} - \cdots - \Om_{2n} +
\Om^{12} + \Om^{22} + \cdots + \Om^{2n} \\ 
&\quad \qquad \qquad \ \ \vdots
\quad \qquad \qquad \vdots \quad \qquad \qquad \vdots \\ 
x_n &= \hspace{0.27cm} \Om_{1n} +\Om_{2n} + \cdots + \Om_{n-1,n} 
+ \Om^{1n} + \Om^{2n} + \cdots + \Om^{nn}
\end{align*}
in $A^{1,1}(\X_\R)$. We also deduce the next result.
\begin{prop}
\label{compprop}
We have $\wt{c}_1(\ov{\E})=\wt{c}_1(\ov{\E}, \ov{\E}^*)=0$, \
$\dis\wt{c}_2(\ov{\E})=-\sum_{i<j}\Om_{ij}$, \ and 
\[
\wt{c}_2(\ov{\E}, \ov{\E}^*) =   
-2\,\sum_{i<j}\Om_{ij}-2\,\sum_{p<q} \Om^{pq}
-\sum_{p=1}^n\Om^{pp}.
\]
\end{prop}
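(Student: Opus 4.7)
The plan is to dispose of the first three assertions using the general Bott--Chern formalism of \S\ref{bcf} together with the flatness of the trivial ambient bundle $\ov E$, and to reserve the explicit curvature computation for the final assertion.

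First I would note that $\wt c_1(\ov\E)=0$ is a special case of the general vanishing recorded just after Proposition~\ref{ratbcf}. Extracting the degree-one Chern class component of equation~\refg{keypart} then gives $\wt c_1(\ov\E,\ov\E^*)=0$, because the first three summands each contain a factor $\wt c_1$ of some hermitian filtration and the fourth summand $(dd^c\wt c(\ov\E))\wedge\wt c(\ov\E^*)$ is of Chern-class degree at least four.

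Since $\ov E$ carries the trivial hermitian metric it is flat, so equation~\refg{ses} applies with $k=2$ to the length-two filtration $\ov\E_{\LG}$ and yields $\wt c_2(\ov\E_{\LG})=-c_1(\ov E/\ov E_n)=c_1(\ov E_n)$; by additivity of $c_1$ on successive orthogonal quotients this equals $\sum_k c_1(\ov Q_k)=-\sum_k x_k$. Inserting the displayed formulas for the $x_k$, the $\Om_{ij}$-contributions cancel pairwise, leaving $\wt c_2(\ov\E_{\LG})=-\sum_p\Om^{pp}-2\sum_{p<q}\Om^{pq}$. Extracting the degree-two Chern class component of~\refg{keypart} then gives
\[
\wt c_2(\ov\E,\ov\E^*)=\wt c_2(\ov\E_{\LG})+\wt c_2(\ov\E)+\wt c_2(\ov\E^*),
\]
since $c_0(\ov E_n)=c_0(\ov E_n^*)=1$, all $\wt c_1$-terms vanish, and the fourth summand is of Chern-class degree $\geq 4$. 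The dualization rule $\wt c_k(\ov\E^*)=(-1)^k\wt c_k(\ov\E)$ then forces $\wt c_2(\ov\E^*)=\wt c_2(\ov\E)$, so once the formula for $\wt c_2(\ov\E)$ is established, the assertion about $\wt c_2(\ov\E,\ov\E^*)$ follows by substitution.

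The remaining task, which is the main technical content, is to prove $\wt c_2(\ov\E)=-\sum_{i<j}\Om_{ij}$. I would apply the additivity $\wt c_2(\ov\E)=\sum_{i=2}^n\wt c_2(\ov\E_i)$ recorded after Proposition~\ref{ratbcf} and evaluate each $\wt c_2(\ov\E_i)$ using the general Bott--Chern formulas of \cite[\S 3]{T2}---note that~\refg{ses} is not directly available because the total bundle $\ov E_i$ is not flat for $i<n$---and then substitute the explicit curvature matrices $K_{E_{i-1}}$, $K_{E_i}$ of Proposition~\ref{grsprop}, with the index adjustment for the skew diagonal symplectic form described in~\S\ref{compaa}. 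I expect all $\Om^{pp}$- and $\Om^{pq}$-contributions to cancel across $i$, leaving $-\sum_{i<j}\Om_{ij}$. The combinatorial bookkeeping of this cancellation, which is delicate because the second fundamental forms of the nested subbundles $\ov E_{i-1}\subset\ov E_i\subset\ov E$ all enter non-trivially, is the principal obstacle.
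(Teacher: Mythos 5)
Your treatment of $\wt{c}_1(\ov{\E})=\wt{c}_1(\ov{\E},\ov{\E}^*)=0$, of $\wt{c}_2(\ov{\E}_{\LG})$, of the dualization rule $\wt{c}_2(\ov{\E}^*)=\wt{c}_2(\ov{\E})$, and of the degree-two extraction from \refg{keypart} is correct and tracks the paper closely; your route to $\wt{c}_2(\ov{\E}_{\LG})=-\sum_k x_k$ via additivity of $c_1$ over the hermitian filtration, rather than via $c_1(\ov{E}_n)=\Tr K_{E_n}$ directly, is a harmless equivalent rephrasing. You are also right that \refg{ses} does not apply to the intermediate pieces $\ov{\E}_i$ because $\ov{E}_i$ is not flat for $i<n$.

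However, there is a genuine gap: you do not prove $\wt{c}_2(\ov{\E})=-\sum_{i<j}\Om_{ij}$, which is the main technical content of the proposition, and the part you flag as the ``principal obstacle'' is exactly what the paper's proof supplies. The paper invokes \cite[Cor.\ 1]{T1}, which gives for each length-two filtration $\ov{\E}_i:0\to\ov{E}_{i-1}\to\ov{E}_i\to\ov{L}_{n+1-i}\to 0$ the formula $\wt{c}_2(\ov{\E}_i)=c_1(\ov{E}_{i-1})-\Tr K^i_{11}$, where $K^i_{11}$ is the upper-left $(i-1)\times(i-1)$ block of $K_{E_i}$. Summing over $i$ and using $c_1(\ov{E}_j)=\Tr K_{E_j}=\Tr K^j_{11}+\Tr K^j_{22}$, the sum telescopes to
\[
\wt{c}_2(\ov{\E})=c_1(\ov{E}_1)-\Tr K^n_{11}+\sum_{i=2}^{n-1}\Tr K^i_{22},
\]
after which each term is read off from Proposition~\ref{grsprop} (with the index shift for the skew-diagonal form), and the $\Om^{pq}$-contributions are seen to cancel directly. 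Because of this telescoping, the ``delicate bookkeeping involving second fundamental forms'' that you anticipate never actually arises; the point of the argument is precisely that \cite[Cor.\ 1]{T1} packages the second-fundamental-form contributions into a difference of traces that collapses. As submitted, your proof establishes the first, second, and fourth assertions modulo the third, but leaves the third unproved.
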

\begin{proof}
We have
$\dis\wt{c}_2(\ov{\E}) = \sum_{i=2}^n\wt{c}_2(\ov{\E}_i)$, where 
$\ov{\E}_i$ is the short exact sequence
\[
\ov{\E}_i \ :\ 0 \to \ov{E}_{i-1} \to \ov{E}_i \to \ov{L}_{n+1-i} \to 0.
\]
For each $i$, write 
\[
K_{E_i}=
\left(
\begin{array}{c|c}
K^i_{11} & K^i_{12} \\ \hline
K^i_{21} & K^i_{22}
\end{array} \right)
\]
where $K^i_{11}$ is an $(i-1)\times (i-1)$ submatrix. According to 
\cite[Cor.\ 1]{T1}, we then have $\wt{c}_2(\ov{\E}_i) = c_1(\ov{E}_{i-1}) -
\Tr \, K^i_{11}$. Therefore
\[
\wt{c}_2(\ov{\E}) = \sum_{i=2}^n \left(c_1(\ov{E}_{i-1}) - \Tr \,
K^i_{11}\right) = c_1(\ov{E}_1) -\Tr\, K^n_{11} + \sum_{i=2}^{n-1}\Tr
\, K^i_{22}.
\]
Using Proposition \ref{grsprop}, we obtain 
\[
c_1(\ov{E}_1) = -\sum_{j<n} \Om_{jn} - \sum_{p=1}^n \Om^{pn},
\]
\[
-\Tr\, K^n_{11} = \sum_{p=1}^n\sum_{q=1}^{n-1}\Om^{p,n+1-q}
\]
and
\[
\Tr \, K^i_{22} = -\sum_{j>i} \Om_{n+1-j,n+1-i} - 
\sum_{p=1}^n \Om^{p,n+1-i}
\]
for $2\leq i \leq n-1$. The claimed computation of $\wt{c}_2(\ov{\E})$
follows by adding these equations.  
We deduce from
(\ref{ses}) that $\wt{c}_2(\ov{\E}_{\LG}) = -\H_1p_1(\ov{E}_n^*) =
c_1(\ov{E}_n)$, while clearly
$\wt{c}_1(\ov{\E}^*)=\wt{c}_1(\ov{\E})=0$ and
$\wt{c}_2(\ov{\E}^*)=\wt{c}_2(\ov{\E})$. 
Therefore, equation (\ref{keypart}) gives
\[
\wt{c}_2(\ov{\E}, \ov{\E}^*) =
\wt{c}_2(\ov{\E}_{\LG})+2\,\wt{c}_2(\ov{\E}) =
c_1(\ov{E}_n)+2\,\wt{c}_2(\ov{\E}).
\]
Finally, $c_1(\ov{E}_n)= \Tr\, K_{E_n}$, and the latter is computed  
using Proposition \ref{grsprop} again.
\end{proof}

\medskip
Let $h(\rX_n)$ be a homogeneous polynomial in the ideal $I_n$ of \S
\ref{spflag}. We give an effective algorithm to compute the
arithmetic intersection $h(\wh{x}_1,\ldots,\wh{x}_n)$ as a class in
$\wt{A}(\X_\R)$. First, we decompose $h$ as a sum $h(\rX_n)=\sum_i
e_i(\rX_n^2)f_i(\rX_n)$ for some polynomials $f_i$.  Equation
(\ref{key}) implies that
\begin{equation}
\label{simple}
e_i(\wh{x}_1^2,\ldots,\wh{x}_n^2) = (-1)^i \,\wt{c}_{2i}(\ov{\E}, \ov{\E}^*)
\end{equation}
for $1\leq i\leq n$. Using this and (\ref{modstr}) we see that
\begin{equation}
\label{fineq}
h(\wh{x}_1,\wh{x}_2,\ldots\wh{x}_n)=\sum_{i=1}^n
(-1)^i\,\wt{c}_{2i}(\ov{\E},\ov{\E}^*)\wedge f_i(x_1,x_2,\ldots,x_n)
\end{equation}
in $\wh{\CH}(\X)$. Now, thanks to the previous analysis, we can write
the right hand side of (\ref{fineq}) as a polynomial in the $x_i$ and
the entries of the matrices $K_{E_i}$ for $1\leq i \leq n$, with
rational coefficients, which is (the class of) an explicit
$\Sp(2n)$-invariant differential form in $\wt{A}(\X_\R)$.

In particular, if $k_i$ are nonnegative integers with $\sum k_i=
\dim{\X}=n^2+1$, the monomial $\x_1^{k_1}\cdots \x_n^{k_n}$ lies in
the ideal $I_n$. If $\x_1^{k_1}\cdots \x_n^{k_n}=\sum_{i=1}^n
e_i(\rX_n^2)f_i(\rX_n)$, then we have
\[
\wh{x}_1^{k_1}\wh{x}_2^{k_2}\cdots\wh{x}_n^{k_n}=
\sum_{i=1}^n(-1)^i\,\wt{c}_{2i}(\ov{\E},\ov{\E}^*)\wedge f_i(x_1,\ldots,x_n).
\]
Now if the top invariant form $\Om$ is defined as in \S \ref{hvb}, we
have shown that
\[
\wt{c}_{2i}(\ov{\E},\ov{\E}^*)\wedge f_i(x_1,\ldots,x_n)=r_i\,\Om
\]
for some rational number $r_i$. Therefore the arithmetic degree
\cite{GS1} of the above monomial satisfies
\[
\wh{\deg}(\wh{x}_1^{k_1}\wh{x}_2^{k_2}\cdots\wh{x}_n^{k_n})=
\frac{1}{2}\sum_{i=1}^n(-1)^i\,r_i\int_{\X(\C)}\Om=  
\frac{1}{2}\prod_{k=1}^n\frac{1}{(2k-1)!}
\sum_{i=1}^n(-1)^i\,r_i.
\]
We deduce the following analogue of \cite[Theorem 4]{T2}.
\begin{thm}
\label{mainthm} For any nonnegative integers  
$k_1,\ldots,k_n$ with $\sum k_i=n^2+1$, the arithmetic Chern number
$\dis \wh{\deg}(\wh{x}_1^{k_1}\wh{x}_2^{k_2}\cdots\wh{x}_n^{k_n}) $ is
a rational number.
\end{thm}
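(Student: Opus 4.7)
The plan is to simply assemble the pieces already set up in Section~\ref{compaa}. First I would note that since $\CH^p(\X) = 0$ for $p > \dim\X = n^2$, any monomial $\x_1^{k_1}\cdots\x_n^{k_n}$ of total degree $n^2+1$ lies in the ideal $I_n$. Since $I_n$ is generated by $e_1(\rX_n^2),\ldots,e_n(\rX_n^2)$, we can write
\[
\x_1^{k_1}\cdots\x_n^{k_n} = \sum_{i=1}^n e_i(\rX_n^2)\,f_i(\rX_n)
\]
for some polynomials $f_i\in\Z[\rX_n]$, which can moreover be taken to be homogeneous of degree $n^2+1-2i$.

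Next I would substitute $\x_i \mapsto \wh{x}_i$ and use the identity~(\ref{simple}), namely $e_i(\wh{x}_1^2,\ldots,\wh{x}_n^2) = (-1)^i\wt{c}_{2i}(\ov{\E},\ov{\E}^*)$, together with the module structure~(\ref{modstr}) of $\wh{\CH}(\X)$ over itself via the ideal $\wt{A}(\X_\R)$, to obtain equation~(\ref{fineq}):
\[
\wh{x}_1^{k_1}\cdots\wh{x}_n^{k_n} = \sum_{i=1}^n (-1)^i\, \wt{c}_{2i}(\ov{\E},\ov{\E}^*)\wedge f_i(x_1,\ldots,x_n).
\]
At this point every term on the right is a smooth top-degree differential form on $\X(\C)$, so the class lies in $\wt{A}^{n^2,n^2}(\X_\R)$ and its arithmetic degree equals $\frac{1}{2}$ times the integral of the underlying form over $\X(\C)$.

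The remaining task is to show this integral is rational. By Proposition~\ref{ratbcf} applied to the hermitian filtrations $\ov{\E}$, $\ov{\E}^*$, and $\ov{\E}_{\LG}$, together with formula~(\ref{keypart}), each component $\wt{c}_{2i}(\ov{\E},\ov{\E}^*)$ is a polynomial with rational coefficients in the entries of the curvature matrices $K_{E_i}$ and $K_{L_i}$. The forms $x_i = -c_1(\ov{L}_i)$ and the Chern forms $c_k(\ov{E}_i)$ that appear in $f_i(x_1,\ldots,x_n)$ and in~(\ref{keypart}) are likewise polynomials in these curvature entries with rational (in fact integer) coefficients. By Proposition~\ref{grsprop}, each such curvature entry is an explicit $\Sp(2n)$-invariant two-form on $\X(\C)$ built from the one-forms $\ome_{ij},\ov{\ome}_{ij},\ome^{pq},\ov{\ome}^{pq}$, so the full right-hand side of~(\ref{fineq}) is a $\Q$-linear combination of wedge products of these invariant one-forms.

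Finally, any top-degree $\Sp(2n)$-invariant form on $\X(\C)$ is a scalar multiple of the volume form $\Om = \bigwedge_{i<j}\Om_{ij}\wedge\bigwedge_{p\leq q}\Om^{pq}$, and as recorded at the end of \S\ref{hvb}, $\int_{\X(\C)} \Om = \prod_{k=1}^n \frac{1}{(2k-1)!}\in\Q$. Writing $\wt{c}_{2i}(\ov{\E},\ov{\E}^*)\wedge f_i(x_1,\ldots,x_n) = r_i\,\Om$ with $r_i\in\Q$, we conclude
\[
\wh{\deg}\bigl(\wh{x}_1^{k_1}\cdots\wh{x}_n^{k_n}\bigr) = \frac{1}{2}\prod_{k=1}^n\frac{1}{(2k-1)!}\sum_{i=1}^n (-1)^i r_i \in\Q,
\]
as claimed. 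The only subtle step is verifying that no irrational constants (e.g.\ powers of $\pi$) survive: this is guaranteed by absorbing the factors of $\frac{i}{2\pi}$ into the definition of the $\Om_{ij},\Om^{pq}$ via the constant $\gamma$ of \S\ref{hvb}, so that both the Bott-Chern polynomial coefficients and $\int_\X\Om$ are manifestly rational — this is the only potential obstacle, and it has already been arranged in the setup.
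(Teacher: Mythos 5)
Your proof is correct and follows essentially the same route as the paper's: reduce to ideal membership $\x_1^{k_1}\cdots\x_n^{k_n}\in I_n$, write it as $\sum_i e_i(\rX_n^2)f_i(\rX_n)$, substitute the arithmetic classes using equation~(\ref{simple}) and the module structure~(\ref{modstr}) to land in $\wt{A}(\X_\R)$, and then invoke Propositions~\ref{ratbcf} and~\ref{grsprop} to express the resulting invariant form as a rational multiple of $\Om$, whose integral over $\X(\C)$ is the rational number $\prod_{k=1}^n 1/(2k-1)!$. Your added remarks on why $I_n$ contains the monomial and on the disappearance of $\pi$-factors via the normalization constant $\gamma$ make explicit points the paper leaves implicit, but the argument and its decomposition into steps coincide with the source.
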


\subsection{Arithmetic Schubert calculus}
\label{asc}

For any partition $\l\in\G_n$ and $\om\in S_n$, define 
\[
\wh{\CS}_{\l,\om} = \CS_{\l,\om}(\wh{x}_1,\ldots,\wh{x}_n). 
\]
If $\l\in\G_n\ssm\D_n$, let $r_\l$ be the largest repeated part of
$\l$, and let $\ov{\l}$ be the partition obtained from $\l$ by
deleting two of the parts $r_{\l}$. For instance, if $\l=(8, 7,7,7,6,
3, 3, 2)$, then $\ov{\l}=(8,7,6,3,3,2)$.

If $\l\in \G_n\ssm\D_n$, then properties (b), (c) in \S \ref{definitions},
(\ref{modstr}), and (\ref{simple}) imply that 
\[
\wh{\CS}_{\l,\om} = \wh{\CS}_{\ov{\l},\om}
\wt{Q}_{r_\l,r_\l}(\wh{x}^2_1,\ldots,\wh{x}^2_n) 
= (-1)^{r_\l}\CS_{\ov{\l},\om}(x_1,\ldots,x_n)\wedge
\wt{c}_{2r_\l}(\ov{\E}, \ov{\E}^*).
\]
Since $\wh{\CS}_{\l,\om} \in a(\wt{A}(\X_\R))$ whenever $\l\in \G_n\ssm\D_n$,
we will denote these classes by $\wt{\CS}_{\l,\om}$.

The next theorem computes arbitrary arithmetic intersections in
$\wh{\CH}(\X)$ with respect to the splitting (\ref{bigiso}) induced 
by (\ref{splittingmap}), using the basis 
of symplectic Schubert polynomials.

\begin{thm}
\label{SPring}
Any element of the arithmetic Chow ring $\wh{CH}(\X)$ can be
expressed uniquely in the form $\dis \sum_{w\in
W_n}a_w\wh{\CS}_w+\eta$, where $a_w\in\Z$ and
$\eta\in\wt{A}(\X_{\R})$. For $u,v\in W_n$ we have
\begin{equation}
\label{maineq}
\wh{\CS}_u\cdot\wh{\CS}_v=\sum_{w\in W_n}c_{uv}^w\,\wh{\CS}_w+
\sum_{{\l\in \G_n\ssm\D_n}\atop{\om\in S_n}}c_{uv}^{\l\om}\,\wt{\CS}_{\l,\om},
\end{equation}
\[
\wh{\CS}_u\cdot \eta=\CS_u(x_1,\ldots,x_n)\wedge\eta,
\ \ \ \ \text{and}
\ \ \ \ \eta\cdot \eta'=(dd^c\eta)\wedge\eta',
\]
where $\eta$, $\eta'\in\wt{A}(\X_{\R})$ and the integers $c_{uv}^w$,
$c_{uv}^{\l\om}$ are as in {\em (\ref{structeq})}.
\end{thm}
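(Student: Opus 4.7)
The plan is to leverage the splitting (\ref{bigiso}) of the exact sequence (\ref{seseq}) together with the polynomial identity (\ref{structeq}) provided by property (b).

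For the existence and uniqueness of the decomposition $\sum a_w\wh{\CS}_w+\eta$, I would argue as follows. By property (a) of the symplectic Schubert polynomials, the classes $\{\CS_w(\rX_n)\}_{w\in W_n}$ form a $\Z$-basis of $\CH(\X)\cong \Z[\rX_n]/I_n$. By construction, the splitting $\epsilon$ of (\ref{splittingmap}) carries $\CS_w(\rX_n)$ to $\wh{\CS}_w$. Combined with $\wh{\CH}(\X)\cong \CH(\X)\oplus \wt{A}(\X_{\R})$, this gives the decomposition: any $\wh{z}\in\wh{\CH}(\X)$ equals $\epsilon(\zeta(\wh{z}))+\eta$ for a unique $\eta\in\wt{A}(\X_\R)$, and expanding $\zeta(\wh{z})$ in the basis $\{\CS_w(\rX_n)\}$ produces the unique coefficients $a_w\in\Z$.

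For the main equation (\ref{maineq}), the key observation is that $\wh{\CH}(\X)$ is a commutative ring and the assignment $\x_i\mapsto \wh{x}_i$ extends to a ring homomorphism $\Z[\rX_n]\to\wh{\CH}(\X)$. Substituting $\wh{x}_i$ for $\x_i$ in the polynomial identity (\ref{structeq}) therefore yields
\[
\wh{\CS}_u\cdot\wh{\CS}_v=\sum_{w\in W_n}c_{uv}^w\,\wh{\CS}_w+
\sum_{{\l\in \G_n\ssm\D_n}\atop{\om\in S_n}}c_{uv}^{\l\om}\,\wh{\CS}_{\l,\om}.
\]
To conclude, I would invoke the calculation made immediately before the statement of the theorem, which shows that for $\l\in\G_n\ssm\D_n$ one has $\wh{\CS}_{\l,\om}=(-1)^{r_\l}\CS_{\ov\l,\om}(x_1,\ldots,x_n)\wedge \wt{c}_{2r_\l}(\ov\E,\ov\E^*)\in a(\wt{A}(\X_\R))$; that is, $\wh{\CS}_{\l,\om}=\wt{\CS}_{\l,\om}$, which is exactly the second sum in (\ref{maineq}).

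For the module relations, the identity $\eta\cdot\eta'=(dd^c\eta)\wedge\eta'$ is a direct restatement of the second half of (\ref{modstr}). The formula $\wh{\CS}_u\cdot\eta=\CS_u(x_1,\ldots,x_n)\wedge\eta$ I would establish by induction on the degree of the polynomial $\CS_u$. For the base step, since $\wh{x}_i=-\wh{c}_1(\ov{L}_i)$, the first half of (\ref{modstr}) gives $\wh{x}_i\cdot\eta=-c_1(\ov{L}_i)\wedge\eta=x_i\wedge\eta$. For a monomial $\wh{x}_{i_1}\cdots\wh{x}_{i_k}\cdot\eta$, successive application produces $x_{i_1}\wedge\cdots\wedge x_{i_k}\wedge\eta$ (since $\wt{A}(\X_\R)$ is stable under wedging with smooth forms), and the general case follows by $\Z$-linearity. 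The main (minor) obstacle here is simply keeping track that the substitution homomorphism respects multiplication in $\wh{\CH}(\X)$ and that the ideal structure of $\wt{A}(\X_\R)$ makes the inductive step legal; neither point is serious, since all the requisite compatibility is already packaged into (\ref{modstr}) and the ring structure of $\wh{\CH}(\X)$.
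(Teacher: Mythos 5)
Your argument is correct and follows exactly the same route as the paper's (very terse) proof: the decomposition follows from the splitting (\ref{bigiso}) and the basis property of the $\CS_w$, equation (\ref{maineq}) is obtained by substituting $\wh{x}_i$ for $\x_i$ in the polynomial identity (\ref{structeq}) and using the observation that $\wh{\CS}_{\l,\om}=\wt{\CS}_{\l,\om}$ lies in $a(\wt{A}(\X_\R))$ for $\l\in\G_n\ssm\D_n$, and the module relations come directly from (\ref{modstr}). You have simply spelled out in more detail the steps the paper compresses into three sentences.
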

\begin{proof} 
The first statement is a consequence of the splitting (\ref{bigiso}).
Equation (\ref{maineq}) follows from the formal identity
(\ref{structeq}) and our definitions of $\wh{\CS}_w$ and
$\wt{\CS}_{\l,\om}$. The remaining assertions are derived immediately from
the structure equations (\ref{modstr}).
\end{proof}

\subsection{The invariant arithmetic Chow ring}
\label{iacr}
The arithmetic Chow group $\wh{\CH}(\X)$ is not finitely generated, as
it contains the infinite dimensional real vector space $\wt{A}(\X_\R)$
as a subgroup. Following \cite[\S 6]{T2}, we can work equally well
with a finite dimensional variant of $\wh{\CH}(\X)$, obtained by
replacing the space $A(\X_\R)$ by a certain subspace of the
space of all $\Sp(2n)$-invariant differential forms on $\X(\C)$.

Recall the notation introduced in \S \ref{hvb} and \S \ref{acg}.  Let
$\Inv(\X_\R)$ denote the ring of $\Sp(2n)$-invariant forms in the
$\R$-subalgebra of $A(\X(\C))$ generated by the differential forms
$\ome_1\wedge\ov{\ome}_2$ for all $\ome_1,\ome_2$ in the set
$\{\ome_{ij},\,\ome^{pq}\ | \ i<j,\ p\leq q\}$.  
Define $\wt{\Inv}(\X_{\R})\subset \wt{A}(\X_{\R})$ to be the image of
$\Inv(\X_{\R})$ in $\wt{A}(\X_{\R})$.

\begin{defn}
The invariant arithmetic Chow ring $\wh{\CH}_{\inv}(\X)$ is the
subring of $\wh{\CH}(\X)$ generated by $\epsilon(\CH(\X))$ and
$a(\wt{\Inv}(\X_{\R}))$, where $\epsilon$ is the splitting map
(\ref{splittingmap}).
\end{defn}

There is an exact sequence of abelian groups
\[
0 \longrightarrow \wt{\Inv}(\X_{\R}) \stackrel{a}\longrightarrow 
\wh{\CH}_{\inv}(\X)
\stackrel{\zeta}\longrightarrow \CH(\X)\longrightarrow 0
\]
which splits under $\epsilon$, giving an
isomorphism of abelian groups
\[
\wh{\CH}_{\inv}(\X)\simeq \CH(\X)\oplus\wt{\Inv}(\X_{\R}).
\]
Theorem \ref{SPring} may be refined to an analogous statement
for the invariant arithmetic Chow ring $\wh{\CH}_{\inv}(\X)$, by
replacing the group $\wt{A}(\X_{\R})$ with $\wt{\Inv}(\X_{\R})$
throughout.

\subsection{Height computation}
\label{hgts}

The flag variety $\X$ has a natural pluri-Pl\"{u}cker embedding $j$ in 
projective space. The morphism $j$ may be defined as a composite
\[
\X\lhra F_{\text{SL}} \stackrel{\iota}{\lhra} \bP^N
\]
where $F_{\text{SL}}= \text{SL}_{2n}/P$ denotes the variety 
parametrizing all partial flags
\[
0=E_0\subset E_1\subset \cdots \subset E_n \subset E_{2n}=E
\]
with $\dim(E_i)=i$ for each $i$, and $\iota$ is a composition
of a product of Pl\"{u}cker embeddings followed by a Segre
embedding. Observe that $j$ is the embedding given by the line bundle
$Q=\bigotimes_{i=1}^n \det (E/E_i)$.  Let $\ov{\O}(1)$ denote the
canonical line bundle over the projective space $\bP^N$, equipped with
its canonical metric (so that $c_1(\ov{\O}(1))$ is the Fubini-Study
form).  The {\em Faltings height} of $\X$ relative to $\ov{\O}(1)$
(see \cite{GS1, Fa, BoGS}) is given by
\[
h_{\ov{\O}(1)}(\X)
=\wh{\deg}\left(\wh{c}_1(\ov{\O}(1))^{n^2+1}\vert \ \X \right).
\]
The pullback $j^*(\O(1))=Q$ is an isometry when $Q(\C)$ is equipped with
the canonical metric given by tensoring the induced metrics on the
determinants of the quotient bundles $E/E_i$ for $1\leq i \leq n$.
The short exact sequences
\[
\ov{\E}_i\ :\ 0 \to \ov{E}_{i-1}\to \ov{E}_i \to \ov{L}_{n+1-i}\to 0
\]
satisfy $\wt{c}_1(\ov{\E}_i)=0$, and hence $\wh{c}_1(\ov{E}_i)=
\wh{c}_1(\ov{E}_{i-1})-\wh{x}_{n+1-i}$.  It follows by induction that
$\wh{c}_1(\ov{E}_i)= - \wh{x}_{n+1-i} -\cdots - \wh{x}_n$, for $1\leq
i \leq n$.  We deduce that
\[
j^*(\wh{c}_1(\ov{\O}(1)))=\wh{c}_1(\ov{Q})=
-\sum_{i=1}^n\wh{c}_1(\ov{E}_i)=
\sum_{i=1}^n i\,\wh{x}_i
\]
and therefore
\begin{equation}
\label{hgteq}
h_{\ov{\O}(1)}(\Sp_{2n}/B)=
\wh{\deg}\left(\wh{c}_1(\ov{Q})^{n^2+1}\vert \ \X\right)=
\wh{\deg}\left(\left(\sum_{i=1}^n i\,\wh{x}_i\right)^{n^2+1}\right).
\end{equation}

We conclude from Theorem \ref{mainthm} and (\ref{hgteq}) that 
the height $h_{\ov{\O}(1)}(\Sp_{2n}/B)$ is a rational number. One may 
also derive this fact from the height formula in \cite{KK}.

\subsection{An example: $\Sp_{4}/B$}
\label{examp}

In this section we compute the arithmetic intersection numbers for the
classes $\wh{x}_i$ in $\wh{\CH}(\X)$ when $n=2$, so that $\X$ is the
variety parametrizing partial flags $0\subset E_1\subset E_2 \subset
E_4=E$ with $E_2$ Lagrangian.

Consider the differential forms
\[
\xi_1=c_1(\ov{E}_2^*) = \Om^{11}+2\,\Om^{12}+\Om^{22}
\]
and 
\[
\xi_2 = c_2(\ov{E}_2^*) = \Om^{11}\Om^{22}+ 2\,\Om^{11}\Om^{12} + 
2\,\Om^{12}\Om^{22}.
\]
Notice that $\xi_1^2=2\,\xi_2$. 
Over $\X$ we have the filtrations of hermitian vector bundles
\[
\ov{\E}_{\LG} : \ 0\subset \ov{E}_2 \subset \ov{E} \qquad \text{and}
\qquad \ov{\E} : \ 0\subset \ov{E}_1 \subset \ov{E}_2.
\]
Equation (\ref{ses}) gives 
\begin{align*}
\wt{c}(\ov{\E}_\LG)&=p_1(\ov{E}_2)+\H_3\,p_3(\ov{E}_2) = 
c_1(\ov{E}_2)+\H_3\left(c_1^3(\ov{E}_2)- 
3c_1(\ov{E}_2)c_2(\ov{E}_2)\right) \\
&= -\xi_1-\frac{11}{6}\xi_1^3+\frac{11}{2}\xi_1\xi_2 = 
-\xi_1+\frac{11}{6}\xi_1\xi_2
\end{align*}
and therefore
\[
\wt{c}(\ov{\E}_\LG)=-\Om^{11}-2\,\Om^{12}-\Om^{22}+11\,
\Om^{11}\Om^{12}\Om^{22}.
\]
On the other hand, Proposition \ref{compprop} gives $\wt{c}(\ov{\E})=
\wt{c}(\ov{\E}^*)= -\Om_{12}$. Using the Maurer-Cartan structure
equations for $\Sp_{2n}(\C)$, we find that
%
%
\[
d\ome_{12} = \ov{\partial}\ome_{12}
=\frac{1}{\gamma}(\ome^{11}\wedge\ov{\ome}^{12} + 
\ome^{12}\wedge\ov{\ome}^{22})
\]
\[
d\ov{\ome}_{12} = \partial\ov{\ome}_{12}
=-\frac{1}{\gamma}(\ome^{22}\wedge\ov{\ome}^{12} + 
\ome^{12}\wedge\ov{\ome}^{11})
\]
and hence
\[
dd^c(\Om_{12})= \gamma^2\partial\ov{\partial}
(\ome_{12}\wedge\ov{\ome}_{12}) = \gamma^2(\ov{\partial}\ome_{12}
\wedge\partial\ov{\ome}_{12})=
\Om^{12}(\Om^{11}+\Om^{22}).
\]
We deduce from the above calculations and (\ref{keypart}) that
\begin{gather*}
\wt{c}(\ov{\E}, \ov{\E}^*) = -\xi_1 -2\,\Om_{12} - 2\,\Om_{12}\,\xi_2
+11\, \Om^{11}\Om^{12}\Om^{22}
+\Om_{12}\Om^{12}(\Om^{11}+\Om^{22}) \\
= -\xi_1 -2\,\Om_{12} - 2\,\Om_{12}\Om^{11}\Om^{22}
-3\, \Om_{12}\Om^{11}\Om^{12} - 3\, \Om_{12}\Om^{12}\Om^{22}
+11\, \Om^{11}\Om^{12}\Om^{22}.
\end{gather*}

Let $a$ and $b$ be nonnegative integers with $a+b=5$.  The Bott-Chern
form $\wt{c}(\ov{\E}, \ov{\E}^*)$ is the key to computing any
arithmetic intersection $\wh{x}_1^a\wh{x}_2^b$, following the
algorithm of \S \ref{compaa}. The result will be a multiple of the
class of $\Om = \Om_{12}\Om^{11}\Om^{12}\Om^{22}$ in the arithmetic
Chow group $\wh{\CH}^5(\X)$. For instance, we compute that
\[
\wh{x}_1^3\wh{x}_2^2 = \wh{x}_1\cdot e_2(\wh{x}_1^2,\wh{x}_2^2) = 
(-\Om_{12}+\Om^{11} + \Om^{12})\wedge \wt{c}_4(\ov{\E}, \ov{\E}^*) =
-16\,\Omega,
\]
while
\[
\wh{x}_1^2\wh{x}_2^3 = \wh{x}_2\cdot e_2(\wh{x}_1^2,\wh{x}_2^2)
=(\Om_{12}+\Om^{12} + \Om^{22})
\wedge \wt{c}_4(\ov{\E}, \ov{\E}^*) = 6\,\Omega.
\]
We similarly find 
\[
\wh{x}_1^5 = 10\,\Om, \ \ \quad
\wh{x}_1^4\wh{x}_2 = -8\,\Om, \ \ \quad
\wh{x}_1\wh{x}_2^4 = 26\,\Om, \ \ \quad
\wh{x}_2^5 = 0.
\]
For the Faltings height of $\X$, we conclude that
\[
h_{\ov{\O}(1)}(\Sp_{4}/B)=\wh{\deg}\left((\wh{x}_1+2\wh{x}_2)^5\right)
= \wh{\deg}(1850\,\Omega) = 925\,\int_{\X(\C)}\Om =\frac{925}{6}.
\]
Kaiser and K\"ohler have proved a cohomological formula for 
the height of generalized flag varieties \cite[Thm.\ 8.1]{KK}.
One can check that in the case of  $\Sp_{4}/B$, their result 
agrees with the above computation.

%
%

\end{document}